\theoremstyle{plain}
\newtheorem{theorem}{Theorem}[section]
\newtheorem{proposition}{Proposition}[section]
\newtheorem{lemma}{Lemma}[section]
\theoremstyle{remark}
\newtheorem{definition}{Definition}[section]
\newtheorem{remark}{Remark}[section]
\newtheorem{examples}{Examples}[section]
\newtheorem{assumption}{Assumption}[section]
\DeclareMathOperator{\supp}{supp}
\begin{document}

\title[Dirichlet forms under the Gelfand transform]{Measures and Dirichlet forms under the Gelfand transform}
\author[Hinz]{Michael Hinz$^{1,2}$}
\address{Mathematisches Institut, Friedrich-Schiller-Universit\"at Jena, Ernst-Abbe-Platz 2, 07737, Germany and Department of Mathematics,
University of Connecticut,
Storrs, CT 06269-3009 USA}
\email{Michael.Hinz.1@uni-jena.de and Michael.Hinz@uconn.edu}
\thanks{$^1$Research supported in part by the Alexander von Humboldt Foundation Feodor (Lynen Research Fellowship Program)}
\author[Kelleher]{Daniel Kelleher$^2$}
\address{Department of Mathematics, University of Connecticut, Storrs, CT 06269-3009 USA}
\email{Daniel.Kelleher@uconn.edu}
\thanks{$^2$Research supported in part by NSF grant DMS-0505622}
\author[Teplyaev]{Alexander Teplyaev$^2$}
\address{Department of Mathematics, University of Connecticut, Storrs, CT 06269-3009 USA}
\email{Alexander.Teplyaev@uconn.edu}

\dedicatory{Dedicated with deep respect to Professor
Ildar Ibragimov on the occasion of his eightieth birthday.}

\date{\today}

\begin{abstract}
Using the standard tools of Daniell-Stone integrals, Stone-\v{C}ech compactification and Gelfand transform, we discuss how any Dirichlet form defined on a measurable space can be  
transformed into a regular Dirichlet form on a locally compact space. This implies  existence, on the Stone-\v{C}ech compactification, of the associated  Hunt process. 
 As an application, we show that for any separable resistance form in the sense of Kigami there exists an associated Markov process. 
\tableofcontents
\end{abstract}
\keywords{Regular symmetric Dirichlet form, C* algebra, Daniell-Stone integral, Stone-Cech compactification, Gelfand transform, fractals. }
\subjclass[2010]{60J25, 60J35, 28A80,  81Q35}
\maketitle

\section{Introduction}\label{S:Intro} 

The main object of our study is a 
 Dirichlet forms $(\mathcal{E},\mathcal{F})$ on the $L_2$-space over a measure space $(X,\mathcal{X},\mu)$. The notion of the Dirichlet form means that $\mathcal{E}$   is a closed nonnegative (bilinear) quadratic form on  $L^2(X,\mathcal{X},\mu)$ with a dense domain $\mathcal{F}\subset L^2(X,\mathcal{X},\mu)$.  Moreover $(\mathcal{E},\mathcal{F})$ has  what is called Markov (or positivity preserving, or normal contraction property): if $u\in \mathcal{F}$ then $\bar u = min(u,1)\in \mathcal{F}$ and 
 $$\mathcal E(\bar u,\bar u) \leqslant \mathcal E(u,u).$$
 By the combination of the standard theories of quadratic forms on Hilbert spaces,  the spectral theory of self-adjoint operators and the Hille-Yosida theorem, there exists an associated 
 self-adjoint operator (non-negative or non-positive, depending on the analytic or probabilistic conventions), which generates a positivity preserving contraction semigroup on $L^2(X,\mathcal{X},\mu)$. 
 This is equivalent to having a semigroup of transition probability kernels which, 
 by the Kolmogorov's general theory of random process, is equivalent to the existence of a symmetric Markov process (in the usual way one may have to allow for the extinction of the process, or to augment the state space $X$ with a  ``cemetery'' point).  
This set up has generated an abundance of strong and well-known results, see e.g. \cite{BH91,Dy1,Dy,FOT94,MR92,RS,RW}, and recently was extensively used in analysis and probability on fractals, see \cite{Ki,Str,K}. 
However most of the  basic  results in the theory of Dirichlet forms and Markov processes rely on a set up where $X$ is assumed to be a topological space. Examples include the classical Beurling-Deny decomposition for regular Dirichlet forms, the existence of energy measures in the sense of Fukushima \cite{FOT94} and LeJan \cite{LJ78} or the existence of an associated Hunt process. To discuss them most references require $X$ to be locally compact. Of course it is desirable to have versions of these theorems in more general situations (for instance for quasi-regular Dirichlet forms on Souslin spaces), and therefore a reduction of topological assumptions was one of the various directions into which the standard theory for regular Dirichlet forms has been extended. One of the typical strategies is to embed the possibly non-locally compact state space $X$ into a larger but (locally) compact space and to transfer the Dirichlet form to this new space, where the standard theory for the locally compact case applies.
See for instance \cite{AMR, AR89, AR90, ChF12, F89, MR92} for some applications of such compactification methods. In \cite{AMR} this idea was used to prove a Beurling-Deny type theorem for quasi-regular Dirichlet forms on Hausdorff spaces $X$ that are such that each compact is metrizable and its Borel $\sigma$-algebra is countably generated. 
Historically the representation theoretic point of view upon Dirichlet forms already dates back to the work of Beurling and Deny, \cite{BD, BDb}, and was later taken up by Fukushima in connection with regularization techniques, see \cite{Fu71} and \cite[Appendix A.4]{FOT94}. A study of Banach algebras naturally induced by Dirichlet forms was carried out by Cipriani, \cite{C06}.

The main ideas of the present note are not new but versions of these ideas. In contrast to the mentioned references we do not assume the given state space $X$ to carry any topology (except for Section \ref{S:seppoints}), and one item we would like to highlight in this context is the Daniell-Stone representation theorem, see e.g. \cite{Dudley}.
Given a multiplicative Stonean vector lattice $\mathcal{B}$ of bounded real-valued functions on a set $X$ we use the connection between the Daniell-Stone theorem and Gelfand's representation theorem for $C^\ast$-algebras to establish an injection of a suitable class of measures on $X$ into the space of nonnegative Radon measures on the spectrum $\Delta$ of the complex uniform closure of $\mathcal{B}$. We apply this idea to show that for any given Dirichlet form over a measurable space there is a corresponding uniquely determined regular Dirichlet form on a larger and locally compact state space.

 We consider the algebra $\mathcal{B}(\mathcal{E})$ of bounded measurable functions on $(X,\mathcal{X})$ that are $\mu$- square integrable and have 
 %$\mu$-equivalence classes of 
 finite energy. The uniform closure of its complexification is a $C^\ast$-algebra, and its spectrum $\Delta$ is a locally compact Hausdorff space. If $\mathcal{B}(\mathcal{E})$ vanishes nowhere, then $\Delta$ (roughly speaking) contains $X$ as a dense subset, and there is a Radon measure $\hat{\mu}$ on $\Delta$ which is uniquely determined by $\mu$ in a way that makes the restriction of the Gelfand transform $f\mapsto \hat{f}$ to $\mathcal{B}(\mathcal{E})$ an $L_2$-isometry, i.e.
\begin{equation}\label{E:introL2}
\left\|\hat{f}\right\|_{L_2(\Delta,\hat{\mu})}=\left\|f\right\|_{L_2(X,\mu)},\ \ f\in\mathcal{B}(\mathcal{E}).
\end{equation}
This allows to define a symmetric bilinear form by
\[\hat{\mathcal{E}}(\hat{f},\hat{g}):=\mathcal{E}(f,g), \ \ f,g\in\mathcal{B}(\mathcal{E}).\]

Our main result, Theorem \ref{T:closable}, says that $\hat{\mathcal{E}}$, together with the image $\hat{\mathcal{B}}(\mathcal{E})$ of $\mathcal{B}(\mathcal{E})$ under the Gelfand map, is closable, and its closure $(\hat{\mathcal{E}},\hat{\mathcal{F}})$ in $L_2(\Delta,\hat{\mu})$ is a symmetric regular Dirichlet form. In other words, we can find a locally compact Hausdorff space $\Delta$ which 'contains' the state space $X$, and a regular Dirichlet form $(\hat{\mathcal{E}},\hat{\mathcal{F}})$ that is the image of $(\mathcal{E},\mathcal{F})$. For this Dirichlet form we can now apply the standard theory \cite{FOT94} and for instance obtain a Beurling-Deny representation and the existence of energy measures. We would like to point out that in \cite{AR89} the embedding of a Souslin standard Borel space into the Gelfand spectrum of a countably generated and point separating algebra of continuous functions had been used to construct a symmetric Hunt process associated with the given Dirichlet form.

In contrast to references like \cite{AMR, AR89} it may not be possible to pull these results back to the Dirichlet form $(\mathcal{E},\mathcal{F})$ on the original state space $X$. For instance, the energy measure of $(\hat{\mathcal{E}},\hat{\mathcal{F}})$ on $\Delta$ may be such that the image of $X$ under the embedding into $\Delta$ is of zero energy measure, see Example~\ref{Ex:counter}. This is reminiscent of the situation in infinite dimensional analysis where the Cameron-Martin space typically is a null set, cf. Remark \ref{R:infdim} and such references as \cite{Gross1,Gross2,IR,Sudakov}. The study of the Dirichlet form $(\hat{\mathcal{E}},\hat{\mathcal{F}})$ on the spectrum $\Delta$ may be a natural way to enlarge the space to support energy measures. Under additional topological assumptions we can recover results similar to those in \cite{AMR, AR89}.

Before we turn to Dirichlet forms we discuss how to naturally relate suitable measures $\mu$ on $X$ to Radon measures $\hat{\mu}$ on $\Delta$. This correspondence relies on a connection between the Daniell-Stone theorem and the Gelfand transform. Although this idea is not new, see for instance \cite{Fuchsst77}, it does not seem to be all too widely used. We consider a multiplicative vector lattice $\mathcal{B}$ of bounded real-valued functions on $X$. The uniform closure $A(\mathcal{B})$ of its complexification is a commutative $C^\ast$-algebra. If $\mu$ is uniquely associated with a positive linear functional on $\mathcal{B}$ then we may use positivity arguments to obtain a uniquely associated positive linear functional on the space $C_c(\Delta, \mathbb{R})$ of real-valued compactly supported functions on the spectrum of $A(\mathcal{B})$. By the Riesz representation theorem this functional can be represented by integration with respect to some uniquely determined Radon measure $\hat{\mu}$ on $\Delta$. Proceeding this way we obtain an injective mapping from a cone of nonnegative measures on $X$ into the cone of nonnegative Radon measures on $\Delta$. The isomorphism property of the Gelfand transform finally yields the $L_2$-isometry (\ref{E:introL2}).

The paper is organized as follows. For convenience, we recall some preliminaries concerning Gelfand theory and the Daniell-Stone theorem in the section \ref{S:Daniell}. In Section \ref{S:connect} we investigate the connection for multiplicative Stonean vector lattices of bounded real-valued functions and establish some lemmas on positivity, support properties and denseness. The main result of Section \ref{S:functionals} is Theorem \ref{T:transfermeasure}, which states the correspondence between measures on $X$ and $\Delta$. As a consequence we also obtain the $L_2$-isometry (\ref{E:introL2}). In Section \ref{S:Dirichlet} we apply these results to Dirichlet forms to obtain the closability of $(\hat{\mathcal{E}},\hat{\mathcal{B}}(\mathcal{E}))$ in $L_2(\Delta,\hat{\mu})$ and the regularity of its closure $(\hat{\mathcal{E}},\hat{\mathcal{F}})$, Theorem \ref{T:closable}. Consequences include the Beurling-Deny representation and the existence of Radon energy measures for the transferred Dirichlet form $(\hat{\mathcal{E}},\hat{\mathcal{F}})$ on $\Delta$, sketched in Section \ref{S:energymeasures}.

We write $C_0(\Delta)$ to denote the space of continuous functions on $\Delta$ that vanish at infinity and $C_c(\Delta)$ to denote its subspace of functions with compact support. For their subspaces of real-valued functions we write $C_0(\Delta,\mathbb{R})$ and $C_c(\Delta,\mathbb{R})$, respectively, and we will do similarly for other function spaces. If the index set of a sequence is not specified, it is the set of natural numbers, and if corresponding limits are taken, they are taken with the index going to infinity.

\subsection{Acknowledgements}
Helpful discussions with Mikhail Gordin, Masha Gordina, Naotaka Kajino, Jun Kigami and Takashi Kumagai are gratefully acknowledged.

\section{Gelfand theory and the Daniell-Stone Theorem}\label{S:Daniell}

For multiplicative vector lattices of bounded real valued functions the theorem of Daniell-Stone can be connected to Gelfand's representation theorem for commutative $C^\ast$-algebras. In this section we briefly recall these two concepts.

We start with remarks on \emph{commutative Gelfand theory}, cf. \cite{Arveson, Blackadar, Kaniuth}. Let $A$ be a commutative $C^\ast$-algebra of bounded functions $a:X\to \mathbb{C}$, with the supremum norm $\left\|\cdot\right\|$ and with the algebra operations defined pointwise and the involution $^\ast$ defined by complex conjugation $a^\ast:=\overline{a}$. By $\Delta(A)$ we denote the \emph{spectrum (Gelfand space)} of $A$, the space of continuous, complex-valued, multiplicative functionals on $A$. Equipped with the Gelfand topology the spectrum $\Delta(A)$ becomes a regular locally compact Hausdorff space, cf. \cite{Kaniuth}. If $A$ contains the constant function $\mathbf{1}$ then $\Delta(A)$ is compact. The space $\Delta(A)$ is second countable if and only if the $C^\ast$-algebra $A$ is separable, and this in turn is equivalent to $A$ being countably generated. For any $a\in A$ the \emph{Gelfand transform} $\hat{a}:\Delta(A)\to \mathbb{C}$ of $a$ is defined by $\hat{a}(\varphi):=\varphi(a)$, and by the Gelfand representation theorem the Gelfand map $a\mapsto \hat{a}$ is seen to be an isometric $^\ast$-isomorphism from the Banach algebra $A$ onto the algebra $C_0(\Delta(A))$ of continuous functions on $\Delta(A)$ vanishing at infinity. If the algebra $A$ vanishes nowhere on $X$, that is, if for any $x\in X$ there exists some $a\in A$ such that $a(x)\neq 0$, then $X$ may be identified with a subset of $\Delta(A)$ by the map $\iota: X\to\Delta(X)$, where 
\begin{equation}\label{E:iota}
\iota(x)(a):=a(x)\ ,\ \ a\in A,
\end{equation}
for any $x\in X$. Note that multiplication in $C_0(\Delta(A))$ is given pointwise, and 
\[\iota(x)(a_1a_2)=(a_1a_2)(x)=a_1(x)a_2(x)=\iota(x)(a_1)\iota(x)(a_2)\]
for any $x\in X$ and $a_1,a_2\in A$. Thus, we observe the set-theoretic inclusion $\iota(X)\subset \Delta(A)$.
The set $\iota(X)$ is dense in $\Delta(A)$. For if not, we could find a nonzero function $f\in C_0(\Delta(A))$ such that $f(\iota(x))=0$ for all $x\in X$. Then, however, some nonzero $a\in A$ would have to exist with $\hat{a}=f \in C_0(\Delta(A))$, hence 
\begin{equation}\label{E:zero}
\hat{a}(\iota(x))=\iota(x)(a)=a(x)
\end{equation} 
would have to be zero for all $x\in X$ and consequently $a\equiv 0$ in $A$, a contradiction.

The second tool we would like to sketch is the \emph{Daniell-Stone Theorem}. Let $X\neq 0$ and let $\mathcal{L}$ be a real vector lattice of functions on $X$, i.e. a vector space of functions $f:X\to\mathbb{R}$ that is closed under minimum and maximum operations $f\wedge g=\min(f,g)$ and $f\vee g=\max(f,g)$. We assume that $\mathcal{L}$ possesses the \emph{Stone property}: for any $f\in\mathcal{L}$, $f\wedge 1\in\mathcal{L}$. By $\sigma(\mathcal{L})$ we denote the $\sigma$-ring of subsets of $X$ generated by $\mathcal{L}$ and by $\mathcal{M}^+(\sigma(\mathcal{L}))$, the cone of (nonnegative) measures on $\sigma(\mathcal{L})$. A positive linear functional $I:\mathcal{L}\to\mathbb{R}$ is called a \emph{Daniell integral} on $\mathcal{L}$ if for any sequence $(f_n)_n\subset\mathcal{L}$ of nonnegative functions decreasing to zero pointwise at all $x\in X$ also the sequence of integrals $(I(f_n))_n$ decreases to zero. The Daniell-Stone Theorem says that for any Daniell integral $I$ on $\mathcal{L}$ there exists a uniquely determined measure $\mu \in \mathcal{M}^+(\sigma(\mathcal{L}))$ on  $\sigma(\mathcal{L})$ such that 
\begin{equation}\label{E:Daniellrep}
I(f)=\int_Xfd\mu\ , \ \ f\in\mathcal{L}. 
\end{equation}
See for instance \cite{Dudley}. We use the notation
\[\mathcal{D}(\mathcal{L}):=\left\lbrace \mu\in \mathcal{M}^+(\sigma(\mathcal{L})): \text{all functions from $\mathcal{L}$ are $\mu$-integrable }\right\rbrace.\]
If $I$ is a Daniell integral on $\mathcal{L}$ then the measure $\mu$ uniquely associated with $I$ by (\ref{E:Daniellrep}) is a member of $\mathcal{D}(\mathcal{L})$. Conversely any $\mu\in \mathcal{D}(\mathcal{L})$ defines a Daniell integral on $\mathcal{L}$ by (\ref{E:Daniellrep}).
Note that if $\mathcal{L}$ contains a strictly positive function, then all measures in $\mathcal{D}(\mathcal{L})$ are $\sigma$-finite, and if it  contains the constant function $\mathbf{1}$, then all measures in $\mathcal{D}(\mathcal{L})$ are finite.\\

\section{Multiplicative Stonean vector lattices}\label{S:connect}

We are interested in special cases to which both theories apply. Let $\mathcal{B}$ be a real \emph{multiplicative} vector lattice of \emph{bounded} functions on $X\neq \emptyset$ that has the Stone property. By $\mathcal{B}+i\mathcal{B}$ we denote its complexification, that is the complex vector space of functions $f_1+if_2$ with $f_1,f_2\in\mathcal{B}$. The vector space operations and the complex conjugation are defined pointwise. We endow $\mathcal{B}+i\mathcal{B}$ with the supremum norm $\left\|\cdot\right\|$ and denote its closure by $A(\mathcal{B})$, clearly a Banach space. Pointwise multiplication turns $A(\mathcal{B})$ into a commutative Banach algebra, and with the involution $^\ast$ defined by complex conjugation it becomes a commutative $C^\ast$-algebra. Under the Gelfand transform $f\mapsto \hat{f}$ the $C^\ast$-algebra  $A(\mathcal{B})$ is isometrically $^\ast$-isomorphic to $C_0(\Delta(A(\mathcal{B})))$. To shorten notation we will write $\Delta$ to abbreviate $\Delta(A(\mathcal{B}))$. From now on we will assume the following.

\begin{assumption}\label{A:nonvanish}
The space $\mathcal{B}$ vanishes nowhere.
\end{assumption}
Under this assumption the set $\iota(X)$, where $\iota$ is defined as in (\ref{E:iota}) with $A=A(\mathcal{B})$, is a dense subset of $\Delta$, and according to (\ref{E:zero}) we have $\hat{f}(\iota(x))=f(x)$ for any $f\in\mathcal{B}$ and $x\in X$. 

To discuss nonnegativity issues let $A(\mathcal{B})^+$ and $C_0(\Delta)^+$ denote the cones of real-valued nonnegative functions in $A(\mathcal{B})$ and $C_0(\Delta)$, respectively. For a real-valued function $f$ we write $f^+=\max(f,0)$ and $f^-=\max(-f,0)$. If $f$ is a member of $\mathcal{B}$ then so are $f^+$ and $f^-$. 

\begin{lemma}\label{L:positive}
A function $f\in A(\mathcal{B})$ is real-valued if and only if $\hat{f}\in C_0(\Delta)$ is. Moreover, we have $f \in A(\mathcal{B})^+$ if and only if $\hat{f}\in C_0(\Delta)^+$.
\end{lemma}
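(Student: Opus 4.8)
The plan is to exploit two facts already established above: that the Gelfand map $f\mapsto\hat f$ is an isometric $^\ast$-isomorphism of $A(\mathcal B)$ onto $C_0(\Delta)$, and that the pointwise identity $\hat f(\iota(x))=f(x)$ holds for every $f\in A(\mathcal B)$ and $x\in X$ (this is (\ref{E:zero}), which is valid for all elements of $A(\mathcal B)$, not merely those of $\mathcal B$), where $\iota(X)$ is a dense subset of $\Delta$ by Assumption \ref{A:nonvanish}.

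For the first assertion I would argue purely algebraically. In $A(\mathcal B)$ the involution $^\ast$ is pointwise complex conjugation, so an element $f$ is real-valued on $X$ precisely when $f=f^\ast$; likewise $\hat f\in C_0(\Delta)$ is real-valued exactly when $\hat f=(\hat f)^\ast$, since the involution on $C_0(\Delta)$ is also complex conjugation. Because the Gelfand map intertwines the involutions, $\widehat{f^\ast}=(\hat f)^\ast$, the equation $f=f^\ast$ holds if and only if $\hat f=(\hat f)^\ast$. This yields the first equivalence at once, with no appeal to the topology of $\Delta$.

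For the second assertion, with the first in hand I may assume throughout that $f$ and $\hat f$ are real-valued. One direction is immediate: if $\hat f\in C_0(\Delta)^+$, then $f(x)=\hat f(\iota(x))\geq 0$ for every $x\in X$, since each $\iota(x)$ lies in $\Delta$, and hence $f\in A(\mathcal B)^+$. For the converse, suppose $f\in A(\mathcal B)^+$; then $\hat f(\iota(x))=f(x)\geq 0$ for all $x$, so $\hat f$ is nonnegative on the subset $\iota(X)\subset\Delta$. This is exactly the point at which the nonvanishing hypothesis enters: Assumption \ref{A:nonvanish} guarantees that $\iota(X)$ is dense in $\Delta$, and since $\hat f$ is continuous the closed set $\{\varphi\in\Delta:\hat f(\varphi)\geq 0\}$ contains the dense set $\iota(X)$ and therefore equals all of $\Delta$. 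Hence $\hat f\in C_0(\Delta)^+$.

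The only genuinely delicate step is this last density-and-continuity argument for the forward direction of the positivity statement; everything else is a formal consequence of the $^\ast$-isomorphism together with the identity $\hat f(\iota(x))=f(x)$. One could instead prove the positivity equivalence entirely $C^\ast$-algebraically, observing that a $^\ast$-isomorphism preserves positive cones and that positivity in $C_0(\Delta)$ coincides with pointwise nonnegativity on $\Delta$; but I prefer the concrete route through $\iota$, as it is more transparent and makes the role of the nonvanishing hypothesis explicit.
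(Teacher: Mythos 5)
Your proof is correct, and its substantive half --- the forward direction of the positivity claim --- is exactly the paper's argument: the identity (\ref{E:zero}) gives $\hat f\geq 0$ on $\iota(X)$, and the denseness of $\iota(X)$ guaranteed by Assumption \ref{A:nonvanish}, combined with continuity of $\hat f$, propagates nonnegativity to all of $\Delta$; the paper's proof is literally the one-line remark that the lemma follows from (\ref{E:zero}) together with this denseness. Where you genuinely differ is in the first assertion: the paper's route would obtain it from (\ref{E:zero}) and density as well (real-valued on a dense set plus continuity), whereas you derive it purely algebraically from the fact that the Gelfand map intertwines the involutions, so that $f=f^\ast$ holds if and only if $\hat f=(\hat f)^\ast$ does. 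That variant is a small but real gain in economy: it uses neither the topology of $\Delta$ nor Assumption \ref{A:nonvanish}, so the real-valuedness equivalence holds for any $\mathcal{B}$, vanishing somewhere or not. Your closing aside --- that the positivity equivalence could also be had abstractly because a $^\ast$-isomorphism of $C^\ast$-algebras preserves positive cones --- is correct but not quite free: one must still check that the abstract positive cone of the concrete function algebra $A(\mathcal{B})$ coincides with the set of pointwise nonnegative real-valued functions (a functional-calculus argument on the spectrum of an element), so the concrete route through $\iota$ that you and the paper both adopt is indeed the more transparent one.
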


This lemma is a consequence of (\ref{E:zero}) together with the denseness of $\iota(X)$ in $\Delta$.

\begin{lemma}\label{L:standard}
For any real-valued $f\in A(\mathcal{B})$ we have $(f^+)^\wedge=\hat{f}^+$ and $(f^-)^\wedge=\hat{f}^-$. 
\end{lemma}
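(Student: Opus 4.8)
The plan is to exploit the surjectivity of the Gelfand map onto $C_0(\Delta)$ together with the pointwise relation $\hat{a}(\iota(x))=a(x)$, valid for every $a\in A(\mathcal{B})$ and $x\in X$ by \eqref{E:zero}, and the denseness of $\iota(X)$ in $\Delta$ guaranteed by Assumption~\ref{A:nonvanish}. I note first that the statement implicitly asserts the membership $f^+\in A(\mathcal{B})$, so that $(f^+)^\wedge$ is even defined; establishing this membership is really the substance of the argument, the rest being bookkeeping.

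First I would record that, since $f$ is real-valued, Lemma~\ref{L:positive} guarantees that $\hat{f}\in C_0(\Delta)$ is real-valued as well. Consequently $\hat{f}^+=\max(\hat{f},0)$ is again a real-valued element of $C_0(\Delta)$: it is the composition of the continuous map $t\mapsto t^+$, which fixes $0$, with $\hat{f}$, and is therefore continuous and vanishing at infinity.

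Next, because the Gelfand map is an isometric $^\ast$-isomorphism from $A(\mathcal{B})$ onto $C_0(\Delta)$, there is a unique $g\in A(\mathcal{B})$ with $\hat{g}=\hat{f}^+$, and Lemma~\ref{L:positive} applied to the nonnegativity of $\hat{g}$ forces $g\in A(\mathcal{B})^+$. It then remains to identify $g$ with the pointwise positive part of $f$. To this end I would evaluate the identity $\hat{g}=\hat{f}^+$ at the functionals $\iota(x)$, $x\in X$: using $\hat{g}(\iota(x))=g(x)$ and $\hat{f}(\iota(x))=f(x)$ from \eqref{E:zero}, one obtains $g(x)=\max(f(x),0)=f^+(x)$ for every $x\in X$. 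Thus $f^+=g$ as functions on $X$, which simultaneously shows $f^+\in A(\mathcal{B})$ and yields $(f^+)^\wedge=\hat{g}=\hat{f}^+$.

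Finally, the identity for the negative part follows formally. Applying what has just been proved to the real-valued element $-f\in A(\mathcal{B})$, and using linearity of the Gelfand map so that $(-f)^\wedge=-\hat{f}$, gives $(f^-)^\wedge=\big((-f)^+\big)^\wedge=(-\hat{f})^+=\hat{f}^-$; alternatively one may simply write $f^-=f^+-f$ and invoke linearity of the Gelfand map together with the already established identity for $f^+$. I expect the only genuinely nontrivial point to be the membership $f^+\in A(\mathcal{B})$, resolved by the evaluation-on-$\iota(X)$ step above, with the denseness of $\iota(X)$ doing the essential work rather than any routine computation.
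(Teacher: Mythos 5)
Your proof is correct, but it closes the argument by a genuinely different mechanism than the paper. The paper's proof works entirely on the transform side: it takes the membership $f^+\in A(\mathcal{B})$ for granted (justified just before the lemma for $f\in\mathcal{B}$ by the lattice property; for a general real-valued $f\in A(\mathcal{B})$ it needs a short extra step, e.g.\ approximating $f$ uniformly by real elements $g_n\in\mathcal{B}$ and using $|g_n^+-f^+|\leq|g_n-f|$), verifies via (\ref{E:zero}) that $(f^+)^\wedge$ and $\hat{f}^+$ agree at every point of $\iota(X)$, and then concludes by continuity together with the denseness of $\iota(X)$ in $\Delta$. You run the argument in the opposite direction: you pull $\hat{f}^+\in C_0(\Delta)$ back through the Gelfand isomorphism to some $g\in A(\mathcal{B})$ and identify $g=f^+$ by evaluating at every $x\in X$; since elements of $A(\mathcal{B})$ are genuine functions on $X$, agreement at all points of $X$ is equality, so no density argument is needed at that stage. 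This buys you two things the paper's proof leaves implicit: the membership $f^+\in A(\mathcal{B})$ comes out as a conclusion rather than a hypothesis, and the final identification rests only on the bijectivity of the Gelfand map. One small inaccuracy in your commentary rather than in your proof: your closing claim that the denseness of $\iota(X)$ ``does the essential work'' misdescribes your own argument --- density enters only indirectly, through your citation of Lemma \ref{L:positive} (whose proof uses it), while in your argument proper the work is done by the isomorphism property and pointwise evaluation; it is the paper's proof, not yours, for which density is the essential ingredient.
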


\begin{proof}
For any $x\in X$ we have $(f^+)^\wedge(\iota(x))=f^+(x)$ by (\ref{E:zero}). If $f(x)\geq 0$ then $f^+(x)=f(x)=\hat{f}(\iota(x))=\hat{f}^+(\iota(x))$. If $f(x)<0$ then   $\hat{f}(\iota(x))<0$ and  $\hat{f}^+(x)=0$. Consequently $(f^+)^\wedge(\iota(x))=\hat{f}^+(\iota(x))$ for all $x\in X$, and by linearity also $(f^-)^\wedge(\iota(x))=\hat{f}^-(\iota(x))$. By continuity and the denseness of $\iota(X)$ in $\Delta$ the lemma follows.
\end{proof}

The members of $A(\mathcal{B})^+$ are all monotone limits of nonnegative functions from $\mathcal{B}$. For this statement Assumption \ref{A:nonvanish} is not needed.

\begin{lemma}\label{L:approxf}
For any function $f\in A(\mathcal{B})^+$ there exists a monotonically increasing sequence $(f_n)_n$ of nonnegative functions $f_n\in\mathcal{B}$ that converges to $f$ pointwise. 
\end{lemma}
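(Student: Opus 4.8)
The plan is to approximate $f$ from below by a monotone sequence built from finite lattice maxima of suitably truncated uniform approximants. First I would use that $f\in A(\mathcal{B})^+$ is, by the very definition of $A(\mathcal{B})$ as a uniform closure, a uniform limit of functions $g_k\in\mathcal{B}+i\mathcal{B}$. Since $f$ is real-valued (by Lemma~\ref{L:positive}), passing to real parts $u_k:=\operatorname{Re}g_k\in\mathcal{B}$ yields $u_k\in\mathcal{B}$ with $\delta_k:=\|u_k-f\|_\infty\to 0$; after passing to a subsequence I may assume $\delta_k$ is strictly decreasing to $0$ (if some $\delta_k=0$ then $f=u_k\in\mathcal{B}$ and the constant sequence $f_n\equiv f$ already works). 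The goal is then to replace each $u_k$ by a nonnegative function $g_k\in\mathcal{B}$ with $0\le g_k\le f$ and $g_k\to f$ pointwise, after which the running maxima $f_n:=g_1\vee\cdots\vee g_n$ will furnish the desired increasing sequence.

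The one genuinely delicate point --- and the step I expect to be the main obstacle --- is producing functions that lie \emph{below} $f$ while staying inside $\mathcal{B}$, since $\mathcal{B}$ need not contain the constant functions and so one cannot simply subtract a constant $\delta_k$ from $u_k$. The key observation that removes this obstacle is the algebraic identity $(u-c)^+=u-(u\wedge c)$, valid for any real-valued $u$ and any $c>0$. Here $u\wedge c=c\,\bigl((c^{-1}u)\wedge 1\bigr)\in\mathcal{B}$ by the Stone property together with the vector space structure, so $(u-c)^+\in\mathcal{B}$ as a difference of two members of $\mathcal{B}$, \emph{without} ever invoking the constant function $\mathbf{1}$. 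I would then set $g_k:=(u_k-\delta_k)^+\in\mathcal{B}$. From $u_k\le f+\delta_k$ we obtain $u_k-\delta_k\le f$, whence $0\le g_k\le f^+=f$; and pointwise convergence follows by a case distinction: where $f(x)>0$ one has $g_k(x)=u_k(x)-\delta_k\in[f(x)-2\delta_k,\,f(x)]$ for all large $k$, while where $f(x)=0$ the sandwich $0\le g_k(x)\le f(x)$ forces $g_k(x)=0$.

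Finally I would set $f_n:=g_1\vee\cdots\vee g_n$, which lies in $\mathcal{B}$ because $\mathcal{B}$ is a vector lattice, and which is nonnegative and increasing in $n$ by construction. Since each $g_k\le f$ we have $f_n\le f$, and since $g_n\le f_n\le f$ with $g_n\to f$ pointwise, a squeeze gives $f_n\to f$ pointwise. Thus $(f_n)_n$ is a monotonically increasing sequence of nonnegative functions in $\mathcal{B}$ converging pointwise to $f$, as required; note that, in agreement with the preceding remark, Assumption~\ref{A:nonvanish} is used nowhere in this argument.
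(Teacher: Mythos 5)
Your proof is correct, and it rests on the same key device as the paper's: the truncation identity $u-u\wedge c=(u-c)^+$, which the Stone property (via $u\wedge c=c\,((c^{-1}u)\wedge 1)$ for $c>0$) makes available inside $\mathcal{B}$ even though $\mathcal{B}$ need not contain constants. Where the two arguments genuinely part ways is in how monotonicity is arranged. The paper starts from \emph{nonnegative} uniform approximants $g_n\in\mathcal{B}$ (obtained by the lattice property), passes to a subsequence whose successive gaps $\delta_n:=\sup_X|g_n-g_{n+1}|$ are summable, and truncates at the tail sums: $f_n:=g_n-g_n\wedge\epsilon_n=(g_n-\epsilon_n)^+$ with $\epsilon_n:=\sum_{k\geq n}\delta_k$. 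Monotonicity is then automatic (though left implicit in the paper): $g_{n+1}\geq g_n-\delta_n$ and $\epsilon_n=\delta_n+\epsilon_{n+1}$ give $g_{n+1}-\epsilon_{n+1}\geq g_n-\epsilon_n$, and $t\mapsto t^+$ is monotone; moreover $\sup_X|g_n-f|\leq\epsilon_n$ shows the convergence is in fact uniform. You instead truncate each approximant at its own uniform distance to $f$, which yields only $0\leq g_k\leq f$ and $g_k\to f$ pointwise, and then restore monotonicity by taking running lattice maxima $f_n:=g_1\vee\cdots\vee g_n$. Your route avoids the subsequence/summability bookkeeping and makes the increasing property trivial, at the price of one more appeal to the lattice structure; the paper's route builds monotonicity into the calibration of the truncation levels. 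Both arguments are complete, of comparable length, and both correctly avoid any use of Assumption \ref{A:nonvanish}, in accordance with the remark preceding the lemma.
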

\begin{proof}
By the lattice property in $\mathcal{B}$, we can see that there is a sequence $(g_n)_n$ of nonnegative functions $g_n\in\mathcal{B}$ converging uniformly to $f$. 
We may assume that the nonnegative numbers $\delta_n:=\sup_X|g_n-g_{n+1}|$ are such that $\sum_n\delta_n<\infty$ (otherwise pass to a subsequence). Setting $f_n:=g_n-g_n\wedge(\sum_{k=n}^\infty\delta_k)$ we obtain a sequence $(f_n)_n$ with the desired properties.
\end{proof}

We discuss compactly supported functions. If $\mathcal{B}$ contains the constant functions, then $\Delta$ is compact, hence every function in $\hat{\mathcal{B}}$ has compact support. To formulate a result for the general case, set
\[\mathcal{B}_c:=\left\lbrace \varphi\in\mathcal{B}:\hat{\varphi}\in C_c(\Delta)\right\rbrace.\]
Clearly $\mathcal{B}_c$ is again a multiplicative vector lattice having the Stone property.

\begin{lemma}\label{L:dense}
The space $\mathcal{B}_c$ is uniformly dense in $\mathcal{B}$.
\end{lemma}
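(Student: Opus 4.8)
The plan is to realize every $f\in\mathcal{B}$ as a uniform limit of truncations that vanish off a compact subset of $\Delta$. Fix $f\in\mathcal{B}$ and $\delta>0$. The idea is to cut $f$ down toward zero by the amount $\delta$: the resulting function should agree with $f$ where $|\hat{f}|$ is large and vanish where $|\hat{f}|\le\delta$. Since $\hat{f}\in C_0(\Delta)$, the set $\{|\hat{f}|\ge\delta\}$ is compact, so such a truncation has compactly supported Gelfand transform, hence lies in $\mathcal{B}_c$, while differing from $f$ by at most $\delta$ in the supremum norm. Letting $\delta\to 0$ then gives the density.

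First I would build the truncation inside $\mathcal{B}$. Clamp $f$ to the band $[-\delta,\delta]$ by setting $c_\delta:=(f\wedge\delta)\vee(-\delta)$. Because $f\wedge\delta=\delta\,((\delta^{-1}f)\wedge 1)$ and $(f\wedge\delta)\vee(-\delta)=-((-(f\wedge\delta))\wedge\delta)$, the Stone property together with the vector space structure of $\mathcal{B}$ shows $c_\delta\in\mathcal{B}$, and hence $\varphi:=f-c_\delta\in\mathcal{B}$. Since $c_\delta$ takes values in $[-\delta,\delta]$ we have $\|f-\varphi\|\le\delta$. Pointwise $\varphi$ is exactly the truncation of $f$ toward zero: it equals $f-\delta$ where $f>\delta$, equals $f+\delta$ where $f<-\delta$, and vanishes where $|f|\le\delta$.

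It then remains to verify $\varphi\in\mathcal{B}_c$. Arguing as in Lemma \ref{L:standard}, by comparing values on the dense set $\iota(X)$ and passing to all of $\Delta$ by continuity, the Gelfand transform commutes with these lattice truncations, so that $\hat\varphi=\hat{f}-(\hat{f}\wedge\delta)\vee(-\delta)$. This function agrees with $\hat{f}$ on $\{|\hat{f}|>\delta\}$ and vanishes on $\{|\hat{f}|\le\delta\}$, so $\{\hat\varphi\neq 0\}\subset\{|\hat{f}|\ge\delta\}$, which is compact because $\hat{f}\in C_0(\Delta)$. Therefore $\hat\varphi\in C_c(\Delta)$, i.e. $\varphi\in\mathcal{B}_c$, which completes the argument.

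The main obstacle is the requirement that the cutoff be assembled entirely inside $\mathcal{B}$: since $\mathcal{B}$ need not contain the constant function $\delta$, one cannot simply form $f-\delta$, and the truncation toward zero must instead be produced from $\wedge$, $\vee$ and scalar multiplication via the Stone property. Once $c_\delta$ is secured in $\mathcal{B}$ and the identity $\hat{c}_\delta=(\hat{f}\wedge\delta)\vee(-\delta)$ is checked by the denseness argument, the uniform estimate and the compactness of the support are both immediate.
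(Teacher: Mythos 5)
Your proof is correct, but its key mechanism differs from the paper's. Both arguments approximate $f$ by subtracting a small truncation, and both ultimately rest on the same compactness fact, namely that $\{|\hat f|\geq\delta\}$ is compact because $\hat f\in C_0(\Delta)$. The paper, however, first reduces to nonnegative $f$, uses the one-sided cutoff $\varphi_k=f-f\wedge\frac1k$, and controls $\supp\hat\varphi_k$ by passing through the state space: a separate density argument shows $\{\hat\varphi_k>0\}\subset\overline{\iota(N^k)}$, and the auxiliary Lemma \ref{L:abovelevel} supplies compactness of $\overline{\iota(N_k(f))}$. You instead treat signed $f$ directly with the two-sided clamp $c_\delta=(f\wedge\delta)\vee(-\delta)$ and move the entire computation to $\Delta$: density of $\iota(X)$ enters exactly once, to prove that the Gelfand transform commutes with the clamp (the argument of Lemma \ref{L:standard}, applied verbatim to these truncations), after which the inclusion $\{\hat\varphi\neq 0\}\subset\{|\hat f|\geq\delta\}$ is a pointwise triviality; Lemma \ref{L:abovelevel}, the sets $N^k$, $N_k(f)$, and the reduction to $f\geq 0$ are all bypassed. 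What each route buys: yours is shorter and keeps the topology of $\Delta$ in view throughout, while the paper's version records the slightly finer information that $\hat\varphi_k$ is supported in the closure of the image of a level set of $f$ itself. Two small points: your phrase that $\hat\varphi$ ``agrees with $\hat f$'' on $\{|\hat f|>\delta\}$ is a slip (it equals $\hat f\mp\delta$ there), but harmless, since only the vanishing on $\{|\hat f|\le\delta\}$ is used; and to conclude $\varphi\in\mathcal{B}_c$ one should note explicitly that $\supp\hat\varphi$, being the closure of $\{\hat\varphi\neq 0\}$, is a closed subset of the compact set $\{|\hat f|\geq\delta\}$, hence compact.
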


To prove Lemma \ref{L:dense} we use a property of upper level sets. Given $\varphi\in\mathcal{B}$ and $k\in\mathbb{N}\setminus\left\lbrace 0\right\rbrace$ set 
\[N_k(f):=\left\lbrace x\in X:|f(x)|\geq \frac{1}{k}\right\rbrace.\]
\begin{lemma}\label{L:abovelevel}
For any $f\in\mathcal{B}$ and any $k$ the closure of the set $\iota(N_k(f))$ is compact in $\Delta$.
\end{lemma}
\begin{proof}
We have $|f|\in\mathcal{B}$ and, according to Lemma \ref{L:standard}, $|f|^\wedge=|\hat{f}|$. Consequently we may assume $f\geq 0$. Since $\hat{f}\in C_0(\Delta)$, the closed set 
\[L_k(f):=\left\lbrace y\in \Delta: \hat{f}(y)\geq \frac{1}{k}\right\rbrace\]
is contained in a compact set and therefore compact itself. On the other hand $\iota(N_k(f))\subset L_k(f)$, what implies that 
$\overline{\iota(N_k(f))}$ is a closed subset of $L_k(f)$, hence compact.
\end{proof}

We verify Lemma \ref{L:dense}. 

\begin{proof}
It suffices to show that nonnegative functions can be approximated. Given $f\in\mathcal{B}$ with $f\geq 0$ consider the functions
\[\varphi_k:=f-f\wedge \frac{1}{k}.\]
Obviously the sequence $(\varphi_k)_k$ uniformly converges to $f$, and for fixed $k$ the set
\[N^k:=\left\lbrace x\in X: \varphi_k(x)>0\right\rbrace\]
is a subset of $N_k(f)$. On the other hand, we have
\[\left\lbrace y\in \Delta: \hat{\varphi}_k(y)>0\right\rbrace\subset \overline{\iota(N^k)}.\]
For if there were some $y\in \Delta$ with $\hat{\varphi}_k(y)>0$ having an open neighborhood $U_y$ such that $\varphi_k(x)=0$ for all $x\in X$ with $\iota(x)\in U_y$, then we would have $\hat{\varphi}_k(z)=0$ for all $z\in U_y$ by the density of $\iota(X)$ in $\Delta$, a contradiction. It also follows that
\[\supp\hat{\varphi}_k\subset \overline{\iota(N^k)}\subset \overline{N_k(f)},\] 
and Lemma \ref{L:abovelevel} implies that $\supp\hat{\varphi}_k$ is compact.
\end{proof}

\section{Positive linear functionals and measures}\label{S:functionals}

In this section we establish a correspondence between suitable measures $\mu$ on $X$ and Radon measures $\hat{\mu}$ on $\Delta$ and list some consequences. As before we assume that $\mathcal{B}$ is a Stonean multiplicative vector lattice of bounded real-valued functions on $X$.

Let $I:\mathcal{B}\to\mathbb{R}$ be a positive linear functional. Given a function $f\in A(\mathcal{B})^+$ and an increasing sequence $(f_n)_n\subset\mathcal{B}$ of nonnegative function as in Lemma \ref{L:approxf}, we set
\begin{equation}\label{E:extendI}
I(f):=\sup_n I(f_n).
\end{equation}
The lattice property of $\mathcal{B}$ guarantees that (\ref{E:extendI}) provides a well-defined positive linear (i.e. positively homogeneous and additive) functional $I:A(\mathcal{B})^+\to [0,+\infty]$. In what follows let Assumption \ref{A:nonvanish} be satisfied. In view of Lemma \ref{L:positive} we can then define a bounded positive linear functional $\hat{I}:C_0(\Delta)^+\to [0,+\infty]$ by
\begin{equation}\label{E:deffunctional}
\hat{I}(\hat{f}):=I(f), \ \ \hat{f}\in C_0(\Delta)^+,
\end{equation}
and according to Lemma \ref{L:standard} we may set $I(f):=I(f^+)-I(f^-)$ and $\hat{I}(\hat{f}):=I(f)$ to extend (\ref{E:deffunctional}) to all $f\in \mathcal{B}$. 

Let $\mathcal{M}^+(\Delta)$ denote the cone of nonnegative Radon measures on $\Delta$. The Riesz representation theorem ensures the existence of a uniquely determined $\hat{\mu}\in\mathcal{M}^+(\Delta)$ such that for any $\hat{f}\in C_c(\Delta)$ we have
\begin{equation}\label{E:Rieszrep}
\hat{I}(\hat{f})=\int_{\Delta}\hat{f}d\hat{\mu} .
\end{equation}

\begin{remark}2
Recall that to prove the existence part of the Riesz representation theorem one usually sets
\[\hat{\mu}(K):=\inf\left\lbrace \hat{I}(\hat{f}): f\in C_c(\Delta,\mathbb{R}), \text{ and $f\geq \mathbf{1}_K$}\right\rbrace\]
for compact $K\subset\Delta$ and defines the $\hat{\mu}$-measure of an arbitrary Borel set by inner approximation by compacts. It is therefore sufficient to know the functional $\hat{I}$ on the cone $C_c(\Delta)^+$.
\end{remark}

Now assume that $I:\mathcal{B}\to\mathbb{R}$ is a Daniell integral and $\mu \in \mathcal{D}(\mathcal{B})$ is the unique measure on $\sigma(\mathcal{B})$ associated with $I$ as in (\ref{E:Daniellrep}). In this case definition (\ref{E:deffunctional}) yields
\begin{equation}\label{E:integralscoincide}
\int_Xf d\mu=\int_{\Delta}\hat{f}d\hat{\mu}, \ \ f\in \mathcal{B}.
\end{equation}
The map $\mu\mapsto \hat{\mu}$ is positive and linear (i.e. additive and positively homogeneous). By (\ref{E:integralscoincide}) and the uniqueness part of the Daniell-Stone Theorem we obtain the following result.

\begin{theorem}\label{T:transfermeasure}
The map $\mu\mapsto \hat{\mu}$ is an injection of $\mathcal{D}(\mathcal{B})$ into $\mathcal{M}^+(\Delta)$.
\end{theorem}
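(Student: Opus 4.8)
The plan is to reduce the injectivity of $\mu\mapsto\hat\mu$ to the uniqueness assertion in the Daniell-Stone theorem; the positivity and linearity of the map have already been recorded, so only injectivity remains. Suppose $\mu_1,\mu_2\in\mathcal{D}(\mathcal{B})$ share the same image, $\hat\mu_1=\hat\mu_2$. Each $\mu_i$ defines a Daniell integral $I_i(f)=\int_X f\,d\mu_i$ on $\mathcal{B}$, and the construction preceding the theorem produces $\hat\mu_i$ by first extending $I_i$ to $A(\mathcal{B})^+$ via (\ref{E:extendI}), passing to $\hat I_i$ on $C_0(\Delta)^+$, and finally applying the Riesz representation theorem on $C_c(\Delta)$.

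The first step is to invoke the identity (\ref{E:integralscoincide}), which is valid for each $\mu_i$ on all of $\mathcal{B}$, to write
\[
\int_X f\,d\mu_i=\int_\Delta \hat f\,d\hat\mu_i,\qquad f\in\mathcal{B},\ i=1,2.
\]
Since by hypothesis $\hat\mu_1=\hat\mu_2$, the right-hand sides coincide for every $f\in\mathcal{B}$, and therefore $\int_X f\,d\mu_1=\int_X f\,d\mu_2$ for all $f\in\mathcal{B}$. In other words, the two Daniell integrals $I_1$ and $I_2$ agree on $\mathcal{B}$. The final step applies uniqueness: both $\mu_1$ and $\mu_2$ are members of $\mathcal{M}^+(\sigma(\mathcal{B}))$ representing the single Daniell integral $I_1=I_2$ through (\ref{E:Daniellrep}), so the uniqueness clause of the Daniell-Stone theorem forces $\mu_1=\mu_2$, which is exactly the asserted injectivity.

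I expect no substantial obstacle here, since the heavy lifting has been done upstream. The only point requiring care is that (\ref{E:integralscoincide}) must be available for \emph{all} of $\mathcal{B}$ rather than merely for those $f$ whose Gelfand transform is compactly supported; this is precisely what the extension of $\hat I$ from $C_c(\Delta)$ to $C_0(\Delta)$ (via the detour through $A(\mathcal{B})^+$ and Lemma \ref{L:approxf}) guarantees. Once that is in hand, the equality of integrals over $X$ is immediate and the conclusion is a direct transcription of Daniell-Stone uniqueness.
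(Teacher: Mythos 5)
Your proof is correct and follows exactly the paper's route: the paper derives injectivity in one line from the identity (\ref{E:integralscoincide}) combined with the uniqueness part of the Daniell--Stone theorem, which is precisely your argument spelled out in detail. Your closing remark about needing (\ref{E:integralscoincide}) on all of $\mathcal{B}$ (via the extension of $\hat{I}$ through $A(\mathcal{B})^+$) is a fair point of care that the paper handles implicitly in the construction preceding the theorem.
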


We may also consider equivalence classes of functions.

\begin{lemma}
Let $\mu\in \mathcal{D}(\mathcal{B})$ and $f\in \mathcal{B}$. Then $f=0$ $\mu$-a.e. on $X$ if and only if $\hat{f}=0$ $\hat{\mu}$-a.e. on $\Delta$. 
\end{lemma}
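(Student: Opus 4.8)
The plan is to establish both implications using the $L_2$-isometry expressed by the coincidence of integrals \eqref{E:integralscoincide} together with the positivity dictionary provided by Lemmas \ref{L:positive} and \ref{L:standard}. First I would reduce to nonnegative functions. Given $f\in\mathcal{B}$, the decomposition $f=f^+-f^-$ holds with $f^+,f^-\in\mathcal{B}$, and by Lemma \ref{L:standard} we have $(f^+)^\wedge=\hat{f}^+$ and $(f^-)^\wedge=\hat{f}^-$. Since the positive and negative parts have disjoint supports, the condition $f=0$ $\mu$-a.e. is equivalent to $f^+=0$ and $f^-=0$ $\mu$-a.e., and likewise $\hat{f}=0$ $\hat{\mu}$-a.e. is equivalent to $\hat{f}^+=0$ and $\hat{f}^-=0$ $\hat{\mu}$-a.e. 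Thus it suffices to prove the claim for $f\in\mathcal{B}$ with $f\geq 0$.

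For the nonnegative case, the key observation is that $f=0$ $\mu$-a.e. if and only if $\int_X f\,d\mu=0$, because $f\geq 0$; and similarly, since $\hat{f}\in C_0(\Delta)^+$ by Lemma \ref{L:positive}, we have $\hat{f}=0$ $\hat{\mu}$-a.e. if and only if $\int_\Delta \hat{f}\,d\hat{\mu}=0$. The equivalence then follows immediately from \eqref{E:integralscoincide}, which gives $\int_X f\,d\mu=\int_\Delta\hat{f}\,d\hat{\mu}$ for all $f\in\mathcal{B}$. Concretely, I would write
\[
f=0\ \mu\text{-a.e.}\iff\int_X f\,d\mu=0\iff\int_\Delta\hat{f}\,d\hat{\mu}=0\iff\hat{f}=0\ \hat{\mu}\text{-a.e.},
\]
valid for nonnegative $f$, and then lift to general $f$ via the reduction above.

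The main obstacle I anticipate is the direction showing that a nonnegative continuous function $\hat{f}$ with zero $\hat{\mu}$-integral must vanish $\hat{\mu}$-almost everywhere: this requires knowing that $\hat{\mu}$ is a genuine (countably additive) measure with respect to which $\hat{f}$ is integrable, so that the standard fact ``nonnegative integrable function with zero integral vanishes a.e.'' applies. This is guaranteed because $\hat{\mu}\in\mathcal{M}^+(\Delta)$ is a Radon measure and, by Lemma \ref{L:approxf} together with \eqref{E:extendI} and monotone convergence, $\hat{f}$ is $\hat{\mu}$-integrable with $\int_\Delta\hat{f}\,d\hat{\mu}=I(f)=\int_X f\,d\mu<\infty$ whenever $\mu\in\mathcal{D}(\mathcal{B})$. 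One subtlety worth a careful sentence is that \eqref{E:integralscoincide} as stated applies to $f\in\mathcal{B}$, so I must stay within $\mathcal{B}$ throughout and exploit that $f^+,f^-\in\mathcal{B}$ rather than passing to limits in $A(\mathcal{B})$; with this in hand the remaining steps are routine.
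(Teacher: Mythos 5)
Your proposal is correct and follows essentially the same route as the paper's proof: decompose $f$ into $f^+$ and $f^-$, invoke Lemma \ref{L:standard} together with the integral identity \eqref{E:integralscoincide}, and conclude from the fact that a nonnegative function with vanishing integral is zero almost everywhere. Your explicit treatment of integrability and of the converse direction merely spells out what the paper leaves as ``follows in a similar manner.''
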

\begin{proof}
Let $f=0$ $\mu$-a.e. on $X$. Then also $f^+$ and $f^-$ vanish $\mu$-a.e. on $X$. By Lemma \ref{L:standard} and (\ref{E:integralscoincide}) therefore $\int_\Delta \hat{f}^+ d\hat{\mu}=0$, hence $\hat{f}^+=0$ $\hat{\mu}$-a.e. The same is true for $\hat{f}^-$ and consequently $\hat{f}=0$ $\hat{\mu}$-a.e. The converse implication follows in a similar manner.
\end{proof}

Therefore the Gelfand map induces a well-defined map from the space of $\mu$-equivalence classes of functions from $\mathcal{B}$ into the space of $\hat{\mu}$-equivalence classes of functions on $\Delta$. We denote it again by $f\mapsto \hat{f}$. We investigate corresponding $L_2$-spaces.

\begin{lemma}\label{L:L2}
Let $\mu\in \mathcal{D}(\mathcal{B})$. For $f\in\mathcal{B}$ we have
\[\left\|f\right\|_{L_2(X,\mu)}=\left\|\hat{f}\right\|_{L_2(\Delta,\hat{\mu})}.\]
\end{lemma}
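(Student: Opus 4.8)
The plan is to deduce the $L_2$-isometry from the already-established linear identity (\ref{E:integralscoincide}) by exploiting the \emph{multiplicative} structure of $\mathcal{B}$. The crucial observation is that $\|f\|_{L_2(X,\mu)}^2 = \int_X f^2\,d\mu$ and $\|\hat{f}\|_{L_2(\Delta,\hat{\mu})}^2 = \int_\Delta \hat{f}^2\,d\hat{\mu}$, so it suffices to establish the single identity $\int_X f^2\,d\mu = \int_\Delta \hat{f}^2\,d\hat{\mu}$ and then take square roots.

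First I would note that since $\mathcal{B}$ is a multiplicative vector lattice, $f\in\mathcal{B}$ implies $f^2 = f\cdot f\in\mathcal{B}$. Because $\mu\in\mathcal{D}(\mathcal{B})$, every function from $\mathcal{B}$ is $\mu$-integrable, so in particular $f^2$ is $\mu$-integrable and the left-hand integral is finite and well defined. I may therefore apply (\ref{E:integralscoincide}) directly to $f^2$ in place of $f$, which yields
\[
\int_X f^2\,d\mu = \int_\Delta (f^2)^\wedge\,d\hat{\mu}.
\]

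Next I would invoke the fact that the Gelfand map $f\mapsto\hat{f}$ is an isometric $^\ast$-isomorphism of the $C^\ast$-algebra $A(\mathcal{B})$ onto $C_0(\Delta)$, hence in particular an algebra homomorphism that respects products. Consequently $(f^2)^\wedge = \hat{f}\cdot\hat{f} = \hat{f}^2$, and substituting this into the previous display gives
\[
\int_X f^2\,d\mu = \int_\Delta \hat{f}^2\,d\hat{\mu},
\]
which is exactly $\|f\|_{L_2(X,\mu)}^2 = \|\hat{f}\|_{L_2(\Delta,\hat{\mu})}^2$. Taking square roots of both sides then completes the proof.

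There is no genuine obstacle here: the substantive work was already carried out in proving (\ref{E:integralscoincide}) and in recording the homomorphism property of the Gelfand transform. The only point demanding a line of care is the verification that $f^2\in\mathcal{B}$ and hence that $f^2$ is $\mu$-integrable, which follows immediately from the multiplicativity of $\mathcal{B}$ together with the definition of $\mathcal{D}(\mathcal{B})$. In short, the lemma is a direct consequence of combining the multiplicative closure of $\mathcal{B}$ with the product-preserving nature of the Gelfand map.
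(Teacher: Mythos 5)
Your proof is correct and follows essentially the same route as the paper's: apply the identity (\ref{E:integralscoincide}) to $f^2\in\mathcal{B}$ and use the homomorphism property $(f^2)^\wedge=\hat{f}^2$ of the Gelfand map. Your extra remark that multiplicativity of $\mathcal{B}$ is what puts $f^2$ in $\mathcal{B}$ (and hence makes it $\mu$-integrable) is a point the paper leaves implicit, but it is the same argument.
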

\begin{proof}
Being an algebra homomorphism, the Gelfand map satisfies $(\hat{f})^2=(f^2)^\wedge$ for any $f\in A(\mathcal{B})$. For $f\in \mathcal{B}$ the identity (\ref{E:integralscoincide}) then yields 
\[\int_X f^2d\mu=\int_\Delta (f^2)^\wedge d\hat{\mu}=\int_\Delta \hat{f}^2d\hat{\mu}.\]
\end{proof}

The following fact will be used in the next section.

\begin{lemma}
For any $\mu\in \mathcal{D}(\mathcal{B})$ the image $\hat{\mathcal{B}}$ of $\mathcal{B}$ is dense in $L_2(\Delta,\hat{\mu},\mathbb{R})$.
\end{lemma}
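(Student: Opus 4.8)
The plan is to combine two density facts: that $C_c(\Delta,\mathbb{R})$ is $L_2(\Delta,\hat{\mu})$-dense (a standard property of Radon measures, which I would simply cite), and that every compactly supported continuous function lies in the $L_2(\hat\mu)$-closure of $\hat{\mathcal{B}}$. The substance is entirely in the second point. As a preliminary I would record that $\hat{\mathcal{B}}$ is uniformly dense in $C_0(\Delta,\mathbb{R})$: since the Gelfand map is an isometric $^\ast$-isomorphism of $A(\mathcal{B})=\overline{\mathcal{B}+i\mathcal{B}}$ onto $C_0(\Delta)$, the space $\hat{\mathcal{B}}+i\hat{\mathcal{B}}$ is uniformly dense in $C_0(\Delta)$, and taking real parts together with Lemma \ref{L:positive} yields uniform density of $\hat{\mathcal{B}}$ in $C_0(\Delta,\mathbb{R})$. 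I would also note that $\hat{\mathcal{B}}$ is a sub-vector-lattice closed under products: it is the image of the multiplicative lattice $\mathcal{B}$, and Lemma \ref{L:standard} shows the Gelfand map intertwines the lattice operations.

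The main obstacle is that $\hat{\mu}$ may be infinite, so uniform smallness of an approximation error does not control its $L_2$-norm unless that error is supported on a set of finite measure; I must therefore produce approximants in $\hat{\mathcal{B}}$ whose supports lie in a fixed compact set. To this end, fix $g\in C_c(\Delta,\mathbb{R})$ with $K:=\supp g$ and $M:=\|g\|_\infty$. Using uniform density together with Lemma \ref{L:dense}, I would select, for each $y\in K$, a function $\varphi_y\in\mathcal{B}_c$ with $\hat\varphi_y(y)\neq 0$; then $\hat\varphi_y^2\in\hat{\mathcal{B}}_c$ is nonnegative and strictly positive on a neighborhood of $y$. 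Passing to a finite subcover of $K$ and summing produces $\hat\Phi\in\hat{\mathcal{B}}_c$ with $\hat\Phi\geq 0$, $\hat\Phi\geq\delta_0>0$ on $K$, and $L:=\supp\hat\Phi$ compact, so that $\hat\mu(L)<\infty$.

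Finally I would truncate against $\hat\Phi$. Choosing $\lambda:=M/\delta_0$ and $\hat f_n\in\hat{\mathcal{B}}$ with $\|\hat f_n-g\|_\infty\to 0$, I set
\[\hat h_n:=\big(\hat f_n\vee(-\lambda\hat\Phi)\big)\wedge(\lambda\hat\Phi)\in\hat{\mathcal{B}}.\]
Since $\hat\Phi$ vanishes off $L$, each $\hat h_n$ is supported in $L$; since $\lambda\hat\Phi\geq M\geq|g|$ on $K$, and the band-truncation is $1$-Lipschitz and leaves $g$ fixed there, one checks the pointwise bound $|\hat h_n-g|\leq\|\hat f_n-g\|_\infty$ on all of $\Delta$, with the difference supported in $L$. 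Hence $\|\hat h_n-g\|_{L_2(\hat\mu)}^2\leq\|\hat f_n-g\|_\infty^2\,\hat\mu(L)\to 0$, which places $g$ in the $L_2(\hat\mu)$-closure of $\hat{\mathcal{B}}$. Combined with the density of $C_c(\Delta,\mathbb{R})$ in $L_2(\Delta,\hat\mu,\mathbb{R})$, this gives the claim. I expect the cutoff construction of $\hat\Phi$ and the verification of the truncation estimate to be the only delicate points; when $\hat\mu$ is finite the whole argument collapses to plain uniform approximation via the preliminary step.
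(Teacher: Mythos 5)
Your proof is correct, but it takes a genuinely different route from the paper's --- and in fact a more careful one. The paper's argument is shorter: it approximates a given $\hat f\in C_0(\Delta,\mathbb{R})$ uniformly by $(\hat f_n)_n\subset\hat{\mathcal{B}}$ via the Gelfand isomorphism, chooses a compact $K_\varepsilon\subset\Delta$ with $\hat\mu(\Delta\setminus K_\varepsilon)<\varepsilon$, and combines uniform convergence on $K_\varepsilon$ with a uniform bound times the small measure of the complement. That choice of $K_\varepsilon$ is possible only if $\hat\mu(\Delta)<\infty$ (a Radon measure is finite on compacts, so an infinite $\hat\mu$ admits no such $K_\varepsilon$), and for infinite $\hat\mu$ the space $C_0(\Delta,\mathbb{R})$ is not even contained in $L_2(\Delta,\hat\mu,\mathbb{R})$ --- only $C_c(\Delta,\mathbb{R})$ is. Since, as the paper itself notes in Section \ref{S:Daniell}, measures in $\mathcal{D}(\mathcal{B})$ need not be finite when $\mathcal{B}$ lacks the constants, this is a genuine implicit restriction in the paper's proof. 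You identified exactly this obstacle and circumvent it: you reduce to $g\in C_c(\Delta,\mathbb{R})$ and, rather than cutting the space, you cut the approximants, using Lemma \ref{L:dense} and the multiplicative lattice structure to build a nonnegative, compactly supported $\hat\Phi$ (with $\Phi\in\mathcal{B}_c$) bounded below on $\supp g$, so that $\hat h_n=\bigl(\hat f_n\vee(-\lambda\hat\Phi)\bigr)\wedge(\lambda\hat\Phi)$ lies in $\hat{\mathcal{B}}$, is supported in the fixed compact $L=\supp\hat\Phi$ of finite $\hat\mu$-measure, and satisfies $|\hat h_n-g|\leq\|\hat f_n-g\|_\infty$ pointwise; your supporting steps (existence of $\varphi_y\in\mathcal{B}_c$ with $\hat\varphi_y(y)\neq 0$, the truncation estimate on and off $\supp g$) all check out. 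What the paper's approach buys is brevity when $\hat\mu$ is finite (e.g.\ when $\mathcal{B}$ contains $\mathbf{1}$, so $\Delta$ is compact); what yours buys is the lemma in its full stated generality, for arbitrary --- possibly infinite --- Radon $\hat\mu$, thereby closing the gap in the paper's own argument.
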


\begin{proof}
Since $C_0(\Delta, \mathbb{R})$ is a dense subspace of $L_2(\Delta,\hat{\mu},\mathbb{R})$, it suffices to show that any $\hat{f}\in C_0(\Delta, \mathbb{R})$ can be approximated in $L_2(\Delta,\hat{\mu},\mathbb{R})$ by functions from $\hat{\mathcal{B}}$. However, as $C_0(\Delta)$ is isometrically isomorphic to the uniform closure $A(\mathcal{B})$ of the complexification of $\mathcal{B}$, there is a sequence $(f_n)_n\subset \mathcal{B}$ such that $(\hat{f}_n)_n$ approximates $\hat{f}$ uniformly. Given $\varepsilon>0$ we can find a compact set $K_\varepsilon\subset\Delta$ such that $\hat{\mu}(\Delta\setminus K_\varepsilon)<\varepsilon$. Then obviously
\[\lim_n \int_{K_\varepsilon} |\hat{f}_n-\hat{f}|^2d\hat{\mu}=0\]
and 
\[\int_{\Delta\setminus K_\varepsilon}|\hat{f}_n-\hat{f}|^2d\hat{\mu}\leq \varepsilon (\left\|f\right\|+\sup_n\left\| f_n\right\|).\]
\end{proof}

\section{Dirichlet forms under the Gelfand map}\label{S:Dirichlet}

We use the setup of the previous section to transfer from a Dirichlet form on a measure space to a regular Dirichlet form on a locally compact second countable Hausdorff space. 

Let $(X,\mathcal{X},\mu)$ be a measure space and $(\mathcal{E},\mathcal{F})$ a Dirichlet form on $L_2(X,\mu,\mathbb{R})$, see for example \cite[Chapter I]{BH91}. We will frequently use the shorthand notation $\mathcal{E}(f):=\mathcal{E}(f,f)$ and do similarly for other bilinear expressions. The space of bounded measurable functions on $X$ is denoted by $b\mathcal{X}$. Set
\begin{equation}\label{E:BE}
\mathcal{B}(\mathcal{E}):=\left\lbrace f\in b\mathcal{X}: \text{ the $\mu$-equivalence class of $f$ is in $\mathcal{F}\cap L_1(X,\mu,\mathbb{R})$}\right\rbrace.
\end{equation}
The Cauchy-Schwarz inequality and the Markov property of $(\mathcal{E},\mathcal{F})$ imply that $\mathcal{B}(\mathcal{E})$ is a multiplicative vector lattice that has the Stone property. In addition we assume the following:

\begin{assumption}\label{A:nonvanishenergy}
The space $\mathcal{B}(\mathcal{E})$ vanishes nowhere on $X$ and is separable with respect to the supremum norm.
\end{assumption}

Let $\Delta$ be the spectrum of the uniform closure $A(\mathcal{B}(\mathcal{E}))$ of the complexification of $\mathcal{B}$. For $f,g\in \mathcal{B}(\mathcal{E})$ we set
\begin{equation}\label{E:Dformhat}
\hat{\mathcal{E}}(\hat{f},\hat{g}):=\mathcal{E}(f,g).
\end{equation}
Obviously $\hat{\mathcal{E}}$ is a nonnegative definite symmetric bilinear form on the dense subspace 
\[\hat{\mathcal{B}}(\mathcal{E})=\left\lbrace \hat{f}\in C_0(\Delta,\mathbb{R}): f\in \mathcal{B}(\mathcal{E})\right\rbrace \]
of $L_2(\Delta,\hat{\mu},\mathbb{R})$. It enjoys the Markov property. In fact, it defines a regular symmetric Dirichlet form on $L_2(\Delta,\hat{\mu},\mathbb{R})$.

\begin{theorem}\label{T:closable}
The form $(\hat{\mathcal{E}},\hat{\mathcal{B}}(\mathcal{E}))$ is closable on $L_2(\Delta,\hat{\mu},\mathbb{R})$. Its closure $(\hat{\mathcal{E}}, \hat{\mathcal{F}})$ defines a symmetric regular Dirichlet form.
\end{theorem}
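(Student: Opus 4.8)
The plan is to establish two things separately: first closability of $(\hat{\mathcal{E}},\hat{\mathcal{B}}(\mathcal{E}))$, and then regularity and the Markov property of its closure. For closability, I would transport the known closedness of $(\mathcal{E},\mathcal{F})$ on $L_2(X,\mu)$ across the Gelfand map using the $L_2$-isometry of Lemma \ref{L:L2}. Concretely, suppose $(\hat{f}_n)_n\subset\hat{\mathcal{B}}(\mathcal{E})$ is $\hat{\mathcal{E}}_1$-Cauchy with $\hat{f}_n\to 0$ in $L_2(\Delta,\hat{\mu})$, where $\hat{\mathcal{E}}_1(\hat{f},\hat{g}):=\hat{\mathcal{E}}(\hat{f},\hat{g})+\langle\hat{f},\hat{g}\rangle_{L_2(\Delta,\hat{\mu})}$. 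By the definition (\ref{E:Dformhat}) of $\hat{\mathcal{E}}$ and Lemma \ref{L:L2}, the corresponding representatives $f_n\in\mathcal{B}(\mathcal{E})$ satisfy $\mathcal{E}_1(f_n-f_m)=\hat{\mathcal{E}}_1(\hat{f}_n-\hat{f}_m)\to 0$, so $(f_n)_n$ is $\mathcal{E}_1$-Cauchy in $\mathcal{F}$, and $\|f_n\|_{L_2(X,\mu)}=\|\hat{f}_n\|_{L_2(\Delta,\hat{\mu})}\to 0$. Since $(\mathcal{E},\mathcal{F})$ is closed, hence closable on its own domain, the $\mathcal{E}_1$-limit of $(f_n)_n$ must be the zero class, which forces $\hat{\mathcal{E}}(\hat{f}_n)=\mathcal{E}(f_n)\to 0$. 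This is exactly the closability criterion, so $(\hat{\mathcal{E}},\hat{\mathcal{B}}(\mathcal{E}))$ is closable and its closure $(\hat{\mathcal{E}},\hat{\mathcal{F}})$ is a well-defined closed symmetric form; density of $\hat{\mathcal{B}}(\mathcal{E})$ in $L_2(\Delta,\hat{\mu},\mathbb{R})$ was already shown in the previous section.

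For the Markov property of the closure, I would first verify it on the core $\hat{\mathcal{B}}(\mathcal{E})$ and then pass to the closure by a standard approximation argument. Given $\hat{f}\in\hat{\mathcal{B}}(\mathcal{E})$ with representative $f\in\mathcal{B}(\mathcal{E})$, the unit contraction $\bar f=(0\vee f)\wedge 1$ again lies in $\mathcal{B}(\mathcal{E})$ by the lattice and Stone properties, and by Lemmas \ref{L:positive} and \ref{L:standard} the Gelfand transform commutes with these lattice operations, so $(\bar f)^\wedge=(0\vee\hat{f})\wedge 1$ is the unit contraction of $\hat{f}$ in $C_0(\Delta,\mathbb{R})$. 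The Markov property of $(\mathcal{E},\mathcal{F})$ then gives $\hat{\mathcal{E}}((\bar f)^\wedge)=\mathcal{E}(\bar f)\leq\mathcal{E}(f)=\hat{\mathcal{E}}(\hat{f})$. To extend this inequality to arbitrary $u\in\hat{\mathcal{F}}$ I would take an $\hat{\mathcal{E}}_1$-approximating sequence from the core and invoke the standard lemma (see \cite[Theorem I.4.11]{BH91} or \cite[Theorem 1.4.1]{FOT94}) that unit contraction is compatible with the closure of a closable Markovian form; this uses that the unit contraction is a normal contraction, is nonexpansive in $L_2$, and that the $\hat{\mathcal{E}}_1$-norm is lower semicontinuous. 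Hence $(\hat{\mathcal{E}},\hat{\mathcal{F}})$ is a symmetric Dirichlet form.

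Regularity requires that $\hat{\mathcal{F}}\cap C_c(\Delta)$ be dense both in $\hat{\mathcal{F}}$ for the $\hat{\mathcal{E}}_1$-norm and in $C_0(\Delta)$ for the uniform norm. For the uniform denseness, Lemma \ref{L:dense} shows $\mathcal{B}_c(\mathcal{E})$ is uniformly dense in $\mathcal{B}(\mathcal{E})$, and since $\hat{\mathcal{B}}(\mathcal{E})$ is uniformly dense in $C_0(\Delta,\mathbb{R})$ (the Gelfand map is an isometric isomorphism onto $C_0(\Delta)$), the compactly supported elements $\hat{\mathcal{B}}_c(\mathcal{E})\subset\hat{\mathcal{F}}\cap C_c(\Delta)$ are uniformly dense in $C_0(\Delta,\mathbb{R})$. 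The $\hat{\mathcal{E}}_1$-denseness is the point that needs the most care: since $\hat{\mathcal{B}}(\mathcal{E})$ is by construction an $\hat{\mathcal{E}}_1$-dense core for $\hat{\mathcal{F}}$, it suffices to approximate a fixed $\hat{f}\in\hat{\mathcal{B}}(\mathcal{E})$ in $\hat{\mathcal{E}}_1$-norm by functions from $\hat{\mathcal{B}}_c(\mathcal{E})$. I would use the truncations $\hat{\varphi}_k=\hat{f}-(\hat{f}\wedge\tfrac1k)$ from the proof of Lemma \ref{L:dense}, whose pullbacks $\varphi_k\in\mathcal{B}(\mathcal{E})$ have compactly supported transforms, and control $\hat{\mathcal{E}}_1(\hat{f}-\hat{\varphi}_k)=\mathcal{E}_1(f-\varphi_k)$ on the $X$-side, where $f-\varphi_k=f\wedge\tfrac1k\to 0$; the $L_2$-part vanishes by dominated convergence via the isometry, and the energy part is handled using the Markovian estimate together with the unit-contraction inequality applied to the normal contractions involved. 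I expect this final $\hat{\mathcal{E}}_1$-approximation by compactly supported functions to be the main obstacle, since it is the only step where one must simultaneously control energy and produce compact support, and it is where separability (Assumption \ref{A:nonvanishenergy}, ensuring $\Delta$ is second countable so that $C_0$ is manageable) genuinely enters.
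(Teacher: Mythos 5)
Your route is the same as the paper's: closability is pulled back through the Gelfand map via \eqref{E:Dformhat} and the $L_2$-isometry of Lemma \ref{L:L2}, the Markov property is checked on $\hat{\mathcal{B}}(\mathcal{E})$ using compatibility of the transform with the lattice operations, and regularity is reduced to Lemma \ref{L:dense} together with an $\hat{\mathcal{E}}_1$-approximation by functions whose transforms have compact support. Your closability paragraph and the uniform-density half of the regularity argument are correct, and your explicit treatment of the Markov property of the closure (which the paper merely asserts before the theorem) is sound.

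The defect is in the final step, the $\hat{\mathcal{E}}_1$-density of $\hat{\mathcal{B}}_c(\mathcal{E})$ in $\hat{\mathcal{F}}$. First, a concrete error: the one-sided truncations $\varphi_k=f-f\wedge\frac{1}{k}$ you borrow from Lemma \ref{L:dense} do not converge to $f$ unless $f\geq 0$; indeed $f-\varphi_k=f\wedge\frac{1}{k}\to f\wedge 0=-f^-$, so $\varphi_k\to f^+$. You need the two-sided truncations $\varphi_k=f-(f\vee(-\frac{1}{k}))\wedge\frac{1}{k}$, which is what the paper uses (their supports still lie in $N_k(f)$, so Lemma \ref{L:abovelevel} still gives compactness of $\supp\hat{\varphi}_k$), or else you must split $f=f^+-f^-$ and approximate each part. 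Second, a genuine gap: the claim that the energy part is ``handled using the Markovian estimate together with the unit-contraction inequality'' is not an argument. Since $f-\varphi_k$ is a normal contraction of $f$, those inequalities yield only the bound $\mathcal{E}(f-\varphi_k)\leq\mathcal{E}(f)$, i.e.\ boundedness, not $\mathcal{E}(f-\varphi_k)\to 0$; this is precisely the point where the paper instead cites \cite[Theorem 1.4.2]{FOT94}. To close the gap yourself you would have to upgrade boundedness to strong convergence: in the Hilbert space $(\mathcal{F},\mathcal{E}_1)$ the bounded sequence $(\varphi_k)_k$ has $f$ as its only possible weak cluster point (weak $\mathcal{E}_1$-convergence implies weak $L_2$-convergence by testing against resolvents, and $\varphi_k\to f$ strongly in $L_2$), so $\varphi_k\rightharpoonup f$ weakly, and then $\limsup_k\mathcal{E}_1(\varphi_k)\leq\mathcal{E}_1(f)$, which follows from the contraction bounds, forces $\mathcal{E}_1(\varphi_k-f)\to 0$. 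Finally, a small correction: second countability of $\Delta$ plays no role in this approximation, contrary to your closing remark; Assumption \ref{A:nonvanishenergy} enters only to place $\Delta$ in the locally compact separable metric setting required by the theory in \cite{FOT94}.
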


\begin{proof}
Let $(\hat{f}_n)_n$ be a sequence of functions from $\hat{\mathcal{B}}(\mathcal{E})$ that is $\hat{\mathcal{E}}$-Cauchy and tends to zero in $L_2(\Delta,\hat{\mu},\mathbb{R})$. Then by (\ref{E:Dformhat}) the sequence $(f_n)_n$ of preimages $f_n\in \mathcal{B}(\mathcal{E})$ of the functions $\hat{f}_n$ under the Gelfand map is $\mathcal{E}$-Cauchy, and by Lemma \ref{L:L2} it tends to zero in $L_2(X,\mu)$. From the closability of $(\mathcal{E},\mathcal{F})$ together with (\ref{E:Dformhat}) it then follows that 
\[\lim_n \hat{\mathcal{E}}(\hat{f}_n)=\lim_n\mathcal{E}(f_n)=0.\]
Therefore $(\hat{\mathcal{E}},\hat{\mathcal{B}}(\mathcal{E}))$ is closable. According to Lemma \ref{L:dense} the set 
\[\hat{\mathcal{B}}_c(\mathcal{E}):=\left\lbrace \hat{f}\in C_c(\Delta): f\in \mathcal{B}(\mathcal{E})\right\rbrace\]
is uniformly dense in $\hat{\mathcal{B}}(\mathcal{E})$, hence also in $C_0(\Delta)$. On the other hand, given $f\in\mathcal{B}(\mathcal{E})$, the functions
\[\varphi_k:=f-(f\vee (-\frac{1}{k}))\wedge \frac{1}{k}\]
converge to $f$ in $\mathcal{E}_1$-norm, see for instance \cite[Theorem 1.4.2]{FOT94}. Consequently $\hat{\mathcal{B}}_c(\mathcal{E})$ is a core for $(\hat{\mathcal{E}}, \hat{\mathcal{F}})$. Note that as a consequence of Assumption \ref{A:nonvanishenergy} the Gelfand spectrum $\Delta$ of $\mathcal{A}(\mathcal{B}(\mathcal{E}))$ is second countable.
\end{proof}

To the symmetric regular Dirichlet form $(\hat{\mathcal{E}}, \hat{\mathcal{F}})$ on $L_2(\Delta,\hat{\mu},\mathbb{R})$ we refer as the \emph{transferred Dirichlet form}.

\section{Beurling-Deny decomposition and energy measures}\label{S:energymeasures}

We record some consequences of the existing theory for Dirichlet forms on locally compact spaces when applied to $(\hat{\mathcal{E}}, \hat{\mathcal{F}})$. As before let $(X,\mathcal{X},\mu)$ be a measure space and $(\mathcal{E},\mathcal{F})$ a symmetric Dirichlet form on $L_2(X,\mu)$ such that Assumption \ref{A:nonvanishenergy} is satisfied.

The first theorem is the \emph{Beurling-Deny representation}.

\begin{theorem}
The transferred Dirichlet form $(\hat{\mathcal{E}},\hat{\mathcal{F}})$ on $L_2(\Delta,\hat{\mu},\mathbb{R})$ admits the decomposition
\begin{multline}
\hat{\mathcal{E}}(\hat{f},\hat{g})=\hat{\mathcal{E}}^c(\hat{f},\hat{g})+\int\int_{\Delta\times\Delta}(\hat{f}(x)-\hat{f}(y))(\hat{g}(x)-\hat{g}(y))\hat{J}(dx,dy)\notag\\
+\int_\Delta \hat{f}(x)\hat{g}(x)\hat{\kappa}(dx)
\end{multline}
for any $\hat{f},\hat{g}\in\hat{\mathcal{B}}(\mathcal{E})$, where $\hat{\mathcal{E}}^c$ is a symmetric nonnegative definite bilinear form on $\hat{\mathcal{B}}(\mathcal{E})$ that is strongly local, $\hat{J}$ is a symmetric nonnegative Radon measure on $\Delta\times \Delta\setminus \left\lbrace (x,x): x\in\Delta\right\rbrace$, and $\hat{k}$ is a nonnegative Radon measure on $\Delta$. The normal contraction operates on $\hat{\mathcal{E}}^c$, and the triple $(\hat{\mathcal{E}}^c, \hat{J}, \hat{k})$ is uniquely determined.
\end{theorem}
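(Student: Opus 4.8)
The plan is to obtain this decomposition as an essentially immediate consequence of the classical Beurling--Deny representation theorem for regular symmetric Dirichlet forms on locally compact separable metric spaces, see \cite[Theorem 3.2.1]{FOT94}. The point is that all of the substantive work has already been carried out in Theorem \ref{T:closable}: once we know that $(\hat{\mathcal{E}},\hat{\mathcal{F}})$ is a \emph{regular} symmetric Dirichlet form on a sufficiently nice space, the decomposition is automatic. Accordingly, the first step is simply to verify that $\Delta$ fits the standard framework. By Theorem \ref{T:closable} the pair $(\hat{\mathcal{E}},\hat{\mathcal{F}})$ is a symmetric regular Dirichlet form on $L_2(\Delta,\hat{\mu},\mathbb{R})$, and by Assumption \ref{A:nonvanishenergy} the algebra $A(\mathcal{B}(\mathcal{E}))$ is separable, so its spectrum $\Delta$ is second countable. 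A locally compact Hausdorff space that is second countable is metrizable, whence $\Delta$ is a locally compact separable metric space, precisely the setting of \cite{FOT94}.

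Next I would invoke the Beurling--Deny theorem in that setting. It produces a uniquely determined triple $(\hat{\mathcal{E}}^c,\hat{J},\hat{k})$, where $\hat{\mathcal{E}}^c$ is a symmetric strongly local nonnegative definite bilinear form on which the normal contraction operates, $\hat{J}$ is a symmetric nonnegative Radon measure on $\Delta\times\Delta\setminus\{(x,x):x\in\Delta\}$, and $\hat{k}$ is a nonnegative Radon measure on $\Delta$, such that the stated identity holds for all $\hat{f},\hat{g}\in\hat{\mathcal{F}}\cap C_0(\Delta)$. Since by construction every element of $\hat{\mathcal{B}}(\mathcal{E})$ lies in $C_0(\Delta,\mathbb{R})$ and, being the image of a function in $\mathcal{B}(\mathcal{E})$, also lies in $\hat{\mathcal{F}}$, we have $\hat{\mathcal{B}}(\mathcal{E})\subset \hat{\mathcal{F}}\cap C_0(\Delta)$. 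Therefore the decomposition holds in particular for all $\hat{f},\hat{g}\in\hat{\mathcal{B}}(\mathcal{E})$, which is the assertion, and uniqueness of the triple is inherited from the cited theorem.

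I do not expect a genuine obstacle here; the entire difficulty has been front-loaded into the construction of $(\hat{\mathcal{E}},\hat{\mathcal{F}})$ and into the regularity statement of Theorem \ref{T:closable}. The only point requiring attention is the precise class of test functions in the version of the Beurling--Deny theorem one cites. If that version is stated only for functions of compact support, I would additionally use that the proof of Theorem \ref{T:closable} exhibits $\hat{\mathcal{B}}_c(\mathcal{E})\subset C_c(\Delta)$ as a core and shows that the truncations $\hat{\varphi}_k$ of any $\hat{f}\in\hat{\mathcal{B}}(\mathcal{E})$ converge to $\hat{f}$ in $\mathcal{E}_1$-norm. One then passes to the limit in the three terms separately: convergence of the left-hand side and of the strongly local part $\hat{\mathcal{E}}^c$ follows at once from $\mathcal{E}_1$-convergence, while for the integral terms against $\hat{J}$ and $\hat{k}$ one controls the integrals by the uniform energy bound supplied by the Cauchy sequence, the only mild care being needed in handling the off-diagonal double integral against $\hat{J}$.
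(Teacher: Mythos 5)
Your proposal is correct and follows essentially the same route as the paper: the paper's proof consists of citing \cite{Allain} or \cite{FOT94}, together with a remark making exactly your reduction---under Assumption \ref{A:nonvanishenergy} the spectrum $\Delta$ is second countable and hence, by Urysohn's theorem, a locally compact separable metric space, so the classical Beurling--Deny theorem applies. Your extra care about the test-function class (compact support versus vanishing at infinity, handled via the core $\hat{\mathcal{B}}_c(\mathcal{E})$ and $\mathcal{E}_1$-convergence of the truncations) addresses a point the paper glosses over, and is a sound way to close it.
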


For a proof see for instance \cite{Allain} or \cite{FOT94}. 

\begin{remark}
Note that these proofs require the local compactness but not the second countability of $\Delta$. However, if $\mathcal{B}(\mathcal{E})$ has a countable subset from which any element in $\mathcal{B}(\mathcal{E})$ can be produced by linear operations, multiplication, truncation by $1$ and taking uniform limits, then $\Delta$ is second countable and by Urysohn's theorem there exists a metric turning $\Delta$ into a locally compact separable metric space.
\end{remark}

Another result is the \emph{existence of energy measures} for the transferred Dirichlet form $(\hat{\mathcal{E}}, \hat{\mathcal{F}})$, which is an immediate consequence its regularity, \cite{FOT94, LJ78}.

\begin{theorem}\label{T:energymeasures}
For any $\hat{f}\in\hat{\mathcal{B}}(\mathcal{E})$ there exists a uniquely determined finite nonnegative Radon measure $\hat{\Gamma}(\hat{f})$ on $\Delta$ such that 
\[2\int_\Delta \hat{\varphi} d\hat{\Gamma}(\hat{f})=2\hat{\mathcal{E}}(\hat{\varphi}\hat{f},\hat{f})-\hat{\mathcal{E}}(\hat{\varphi},\hat{f}^2)\]
for any $\hat{\varphi}\in\hat{\mathcal{B}}(\mathcal{E})$. 
\end{theorem}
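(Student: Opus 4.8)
The plan is to produce $\hat\Gamma(\hat f)$ as the Radon measure representing the linear functional
\[
I_{\hat f}(\hat\varphi):=2\hat{\mathcal{E}}(\hat\varphi\hat f,\hat f)-\hat{\mathcal{E}}(\hat\varphi,\hat f^2),\qquad \hat\varphi\in\hat{\mathcal{B}}(\mathcal{E}).
\]
This is well defined: since the Gelfand map is an algebra homomorphism and $\mathcal{B}(\mathcal{E})$ is a multiplicative lattice, both $\hat\varphi\hat f$ and $\hat f^2$ again belong to $\hat{\mathcal{B}}(\mathcal{E})\subset\hat{\mathcal{F}}$, and by (\ref{E:Dformhat}) one has $I_{\hat f}(\hat\varphi)=2\mathcal{E}(\varphi f,f)-\mathcal{E}(\varphi,f^2)$. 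The statement then splits into three claims: that $I_{\hat f}$ is \emph{positive}, that it is represented by a \emph{Radon} measure, and that this measure is \emph{finite} and \emph{unique}.

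The crux of the argument, and the step I expect to be the main obstacle, is positivity: $I_{\hat f}(\hat\varphi)\geq 0$ for $\hat\varphi\geq 0$. Here I would follow Fukushima and LeJan \cite{FOT94,LJ78} and pass to the approximating forms. By Theorem \ref{T:closable}, $(\hat{\mathcal{E}},\hat{\mathcal{F}})$ is a regular Dirichlet form on the locally compact space $\Delta$, so its symmetric sub-Markovian semigroup $(\hat p_t)_{t>0}$ has a transition kernel and the bounded forms
\[
\hat{\mathcal{E}}^{(t)}(u,v):=\tfrac{1}{t}\,(u-\hat p_t u,\,v)_{L_2(\Delta,\hat\mu)}
\]
satisfy $\hat{\mathcal{E}}^{(t)}(u,v)\to\hat{\mathcal{E}}(u,v)$ as $t\downarrow 0$ and $\hat{\mathcal{E}}^{(t)}(u,u)\leq\hat{\mathcal{E}}(u,u)$ for $u,v\in\hat{\mathcal{F}}$, each admitting the pure jump-plus-killing representation
\[
\hat{\mathcal{E}}^{(t)}(u,v)=\tfrac{1}{2}\int\!\!\int(u(x)-u(y))(v(x)-v(y))\,\sigma_t(dx,dy)+\int u\,v\,k_t(dx)
\]
with $\sigma_t$ symmetric and nonnegative and $k_t\geq 0$. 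The key is then the elementary pointwise identity
\[
2(ap-bq)(a-b)-(p-q)(a^2-b^2)=(a-b)^2(p+q),
\]
evaluated at $a=\hat f(x)$, $b=\hat f(y)$, $p=\hat\varphi(x)$, $q=\hat\varphi(y)$: it shows the jump part of $2\hat{\mathcal{E}}^{(t)}(\hat\varphi\hat f,\hat f)-\hat{\mathcal{E}}^{(t)}(\hat\varphi,\hat f^2)$ equals $\tfrac{1}{2}\int\!\!\int(\hat f(x)-\hat f(y))^2(\hat\varphi(x)+\hat\varphi(y))\,\sigma_t(dx,dy)\geq 0$, while its killing part equals $\int\hat\varphi\,\hat f^2\,k_t(dx)\geq 0$. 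Hence this quantity is nonnegative at each level $t$, and letting $t\downarrow 0$ gives $I_{\hat f}(\hat\varphi)\geq 0$.

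It remains to represent and control $I_{\hat f}$. By Lemma \ref{L:dense} the core $\hat{\mathcal{B}}_c(\mathcal{E})$ is uniformly dense in $\hat{\mathcal{B}}(\mathcal{E})$, hence in $C_0(\Delta)$ and $C_c(\Delta)$; dominating $I_{\hat f}(\hat\varphi)\leq I_{\hat f}(\chi)$ for $0\leq\hat\varphi\leq 1$ supported in a compact $K$ by a cutoff $\chi\in\hat{\mathcal{B}}_c(\mathcal{E})$ with $\chi\geq 1$ on $K$ shows local boundedness, so $I_{\hat f}$ extends to a positive linear functional on $C_c(\Delta)$ and the Riesz representation theorem yields a unique nonnegative Radon measure $\hat\Gamma(\hat f)$ with $2\int_\Delta\hat\varphi\,d\hat\Gamma(\hat f)=I_{\hat f}(\hat\varphi)$, first for $\hat\varphi\in C_c(\Delta)$ and then, by density, for all $\hat\varphi\in\hat{\mathcal{B}}(\mathcal{E})$. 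For finiteness I would reuse the level-$t$ representation: for $0\leq\hat\varphi\leq 1$ one has $I^{(t)}_{\hat f}(\hat\varphi)\leq 2\hat{\mathcal{E}}^{(t)}(\hat f,\hat f)\leq 2\hat{\mathcal{E}}(\hat f,\hat f)$, so passing to the limit gives $\hat\Gamma(\hat f)(\Delta)\leq\hat{\mathcal{E}}(\hat f,\hat f)<\infty$. Uniqueness is immediate, since the uniform density of $\hat{\mathcal{B}}_c(\mathcal{E})$ in $C_0(\Delta)$ pins down the finite Radon measure completely. Apart from the positivity step, everything here is a direct transcription of the locally compact theory now available on $\Delta$.
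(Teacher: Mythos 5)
Your proposal is correct and follows essentially the same route as the paper: the paper proves this theorem simply by invoking the standard theory for regular Dirichlet forms on locally compact spaces (citing \cite{FOT94, LJ78}), and your argument --- positivity of $\hat\varphi\mapsto 2\hat{\mathcal{E}}(\hat\varphi\hat f,\hat f)-\hat{\mathcal{E}}(\hat\varphi,\hat f^2)$ via the jump-plus-killing representation of the approximating forms $\hat{\mathcal{E}}^{(t)}$, followed by the Riesz representation theorem, the mass bound $\hat\Gamma(\hat f)(\Delta)\leq\hat{\mathcal{E}}(\hat f,\hat f)$, and uniqueness from the uniform density of $\hat{\mathcal{B}}_c(\mathcal{E})$ in $C_0(\Delta)$ --- is precisely the standard argument from those references, made available here by Theorem \ref{T:closable}. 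The algebraic identity $2(ap-bq)(a-b)-(p-q)(a^2-b^2)=(a-b)^2(p+q)$ and all limiting steps check out, so the proof is a faithful and correct expansion of what the paper leaves as a citation.
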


If the original Dirichlet form $(\mathcal{E},\mathcal{F})$ itself admits energy measures, that is if for any $f\in\mathcal{B}(\mathcal{E})$ there exists some nonnegative measure $\Gamma(f)$ such that 
\begin{equation}\label{E:originalenergy}
2\int_X \varphi d\Gamma(f)=2\mathcal{E}(\varphi f,f)-\mathcal{E}(\varphi,f^2), \ \ \varphi\in\mathcal{B}(\mathcal{E}),
\end{equation}
then the energy measures $\hat{\Gamma}(\hat{f})$ are consistent with these original ones.

\begin{theorem}
Assume that $(\mathcal{E},\mathcal{F})$ admits energy measures (\ref{E:originalenergy}). Then for any $f\in\mathcal{B}(\mathcal{E})$ we have 
\[(\Gamma(f))^\wedge=\hat{\Gamma}(\hat{f}).\]
\end{theorem}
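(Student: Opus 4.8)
The plan is to show that the transferred measure $(\Gamma(f))^\wedge$, obtained by applying the correspondence $\mu\mapsto\hat{\mu}$ of Theorem \ref{T:transfermeasure} to the original energy measure $\Gamma(f)$, satisfies exactly the relation that characterizes the energy measure $\hat{\Gamma}(\hat{f})$ in Theorem \ref{T:energymeasures}. The asserted identity then drops out of the uniqueness statement of that theorem.

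First I would check that $(\Gamma(f))^\wedge$ is well defined. Since every $\varphi\in\mathcal{B}(\mathcal{E})$ appears as an integrand on the left-hand side of (\ref{E:originalenergy}), the hypothesis that $(\mathcal{E},\mathcal{F})$ admits energy measures forces each function from $\mathcal{B}(\mathcal{E})$ to be $\Gamma(f)$-integrable, that is, $\Gamma(f)\in\mathcal{D}(\mathcal{B}(\mathcal{E}))$. Hence Theorem \ref{T:transfermeasure} produces a uniquely determined $(\Gamma(f))^\wedge\in\mathcal{M}^+(\Delta)$, and the integral identity (\ref{E:integralscoincide}) applies with $\mu=\Gamma(f)$.

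The central computation is then short. Let $\varphi\in\mathcal{B}(\mathcal{E})$ be arbitrary. Because $\mathcal{B}(\mathcal{E})$ is a multiplicative vector lattice, both $\varphi f$ and $f^2$ again lie in $\mathcal{B}(\mathcal{E})$, and since the Gelfand map is an algebra homomorphism we have $(\varphi f)^\wedge=\hat{\varphi}\,\hat{f}$ and $(f^2)^\wedge=\hat{f}^2$. Combining the transfer identity (\ref{E:integralscoincide}) for $\Gamma(f)$, the defining relation (\ref{E:originalenergy}) of the original energy measure, and the definition (\ref{E:Dformhat}) of $\hat{\mathcal{E}}$, I would obtain
\[ 2\int_\Delta \hat{\varphi}\, d(\Gamma(f))^\wedge = 2\int_X \varphi\, d\Gamma(f) = 2\mathcal{E}(\varphi f,f)-\mathcal{E}(\varphi,f^2) = 2\hat{\mathcal{E}}(\hat{\varphi}\,\hat{f},\hat{f})-\hat{\mathcal{E}}(\hat{\varphi},\hat{f}^2) \]
for every $\hat{\varphi}\in\hat{\mathcal{B}}(\mathcal{E})$. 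This is precisely the equation defining $\hat{\Gamma}(\hat{f})$ in Theorem \ref{T:energymeasures}, and it holds on the full class of test functions, because every element of $\hat{\mathcal{B}}(\mathcal{E})$ is the Gelfand transform of some $\varphi\in\mathcal{B}(\mathcal{E})$; no separate density step is needed to obtain the relation itself.

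The only point requiring care, and the place where I expect the real obstacle, is the passage from this functional identity to the equality of measures. To invoke the uniqueness part of Theorem \ref{T:energymeasures} I must verify that $(\Gamma(f))^\wedge$ belongs to the class in which uniqueness is asserted, namely the finite nonnegative Radon measures. Nonnegativity and the Radon property are built into the construction behind Theorem \ref{T:transfermeasure}. For finiteness I would use that $\Gamma(f)$ is a finite measure on $X$, as energy measures customarily are, together with the fact that the transfer map does not increase total mass: for $g\in\mathcal{B}(\mathcal{E})$ with $0\le\hat{g}\le 1$, Lemma \ref{L:positive} gives $0\le g\le 1$, so $\int_\Delta \hat{g}\, d(\Gamma(f))^\wedge=\int_X g\, d\Gamma(f)\le\Gamma(f)(X)$, and a routine approximation based on the uniform density of $\hat{\mathcal{B}}_c(\mathcal{E})$ in $C_0(\Delta)$ from Lemma \ref{L:dense} extends this bound to all nonnegative functions in $C_c(\Delta)$ bounded by $1$, whence $(\Gamma(f))^\wedge(\Delta)\le\Gamma(f)(X)<\infty$. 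With both $(\Gamma(f))^\wedge$ and $\hat{\Gamma}(\hat{f})$ now finite nonnegative Radon measures satisfying the same relation for all $\hat{\varphi}\in\hat{\mathcal{B}}(\mathcal{E})$, the uniqueness in Theorem \ref{T:energymeasures} yields $(\Gamma(f))^\wedge=\hat{\Gamma}(\hat{f})$.
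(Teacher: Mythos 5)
Your proof is correct and takes essentially the same route as the paper: the identical chain of equalities --- the transfer identity (\ref{E:integralscoincide}) for $\Gamma(f)$, the defining relation (\ref{E:originalenergy}), the definition (\ref{E:Dformhat}) of $\hat{\mathcal{E}}$, and the homomorphism property of the Gelfand map --- tested against $\hat{\varphi}\in\hat{\mathcal{B}}(\mathcal{E})$. The paper simply writes this chain and stops; your additional checks (that $\Gamma(f)\in\mathcal{D}(\mathcal{B}(\mathcal{E}))$ so the transfer is well defined, and that $(\Gamma(f))^\wedge$ is a finite nonnegative Radon measure so the uniqueness in Theorem \ref{T:energymeasures} applies) merely make explicit steps the paper leaves implicit.
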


\begin{proof}
For any $\hat{\varphi}\in C_0(\Delta)$ we have 
\begin{align}
\int_\Delta\hat{\varphi} d(\Gamma(f))^\wedge &=\int_X\varphi d\Gamma(f)\notag\\
&=2\mathcal{E}(f\varphi,f)-\mathcal{E}(\varphi, f^2)\notag\\
&=2\hat{\mathcal{E}}((f\varphi)^\wedge,\hat{f})-\hat{\mathcal{E}}(\hat{\varphi},(f^2)^\wedge)\notag\\
&=2\hat{\mathcal{E}}(\hat{f}\hat{\varphi},\hat{f})-\hat{\mathcal{E}}(\hat{\varphi},\hat{f}^2)\notag\\
&=\int_\Delta \hat{\varphi} d\hat{\Gamma}(\hat{f}).\notag
\end{align}
\end{proof}

Theorem \ref{T:energymeasures} is significant, because as the following examples show, the original Dirichlet form $(\mathcal{E},\mathcal{F})$ itself may not admit energy measures.

\begin{examples}\label{Ex:counter}
Consider the classical Dirichlet integral on the unit interval $[0,1]$, given by 
\[\mathcal{E}_0(g):=\int_0^1g'(x)^2dx\]
for any function $g$ from
\[\mathcal{F}_0:=\left\lbrace g\in C([0,1]): \mathcal{E}(g)<\infty\right\rbrace.\]
The form $(\mathcal{E}_0,\mathcal{F}_0)$ is a resistance form on $[0,1]$ in the sense of Kigami \cite{Ki03, Ki12}. We consider the countable state space $X=\mathbb{Q}\cap [0,1]$. Set
\[\mathcal{F}_0|_X:=\left\lbrace f:X\to\mathbb{R}: \text{there exists some $g\in\mathcal{F}_0$ such that $f=g|_X$}\right\rbrace\]
and 
\[\mathcal{E}(f):=\mathcal{E}_0(g), \ \ f\in \mathcal{F}_0|_X.\]
Here $g|_X$ denotes the pointwise restriction of the continuous function $g$ to $X$. By continuity and the density of $X$ in $[0,1]$ each $f\in\mathcal{F}_0|_X$ is the restriction of exactly one function $g\in\mathcal{F}_0$. Now let $\delta_q$ denote the normed Dirac point measure at a given point $q$ and let $\left\lbrace q_n\right\rbrace_{n=1}^\infty$ be an enumeration of $X$. Then 
\[\mu:=\sum_{n=1}^\infty 2^{-n} \delta_{q_n}\]
is a probability measure. The form $(\mathcal{E},\mathcal{F}_0|_X)$ is closable in $L_2(X,\mu)$, see for instance \cite[Lemma 9.2 and Theorem 9.4]{Ki12}, and its closure $(\mathcal{E},\mathcal{F})$ is a Dirichlet form. For a function $f\in \mathcal{F}_0|_X$ with $\mathcal{E}(f)>0$ (such as for instance the restriction to $X$ of a nonconstant linear function) and $g\in\mathcal{F}_0$ is such that $f=g|_X$ we have 
\begin{equation}\label{E:tryenergy}
2\mathcal{E}(\varphi f, f)-\mathcal{E}(\varphi, f^2)= 2\int_0^1 \psi (x) g'(x)^2dx,
\end{equation}
for all $\varphi\in\mathcal{F}_0|_X$ with $\varphi= \psi|_X$, $\psi\in \mathcal{F}_0$. On the other hand approximation by piecewise linear functions shows that $\mathcal{F}_0$ is dense in $C([0,1])$, and consequently any bounded Borel function on $[0,1]$ can be approximated pointwise by a uniformly bounded sequence of functions from $\mathcal{F}_0$. Let $(\psi_n)_n\subset\mathcal{F}_0$ be a uniformly bounded sequence of functions that approximate $\mathbf{1}_X$ pointwise. If for some $f$ as above $(\mathcal{E},\mathcal{F})$ would admit energy measures as in (\ref{E:originalenergy}) then we would obtain
\[\int_X \psi_n|_X\:d\Gamma(f)=\int_\Delta (\psi_n|_X)^\wedge\:d\hat{\Gamma}(f)=\int_0^1\psi_n(x) g'(x)^2 dx,\]
and by bounded convergence
\[\mathcal{E}(f)=\Gamma(f)(X)=\int_X g'(x)^2dx=0,\]
because the restriction of $g'(x)^2dx$ to $X$ is the zero measure. This contradicts $\mathcal{E}(f)>0$.
\end{examples}

\begin{remark}\label{R:infdim}
In some sense the situation of Example \ref{Ex:counter} displays a similar feature as we encounter it for Dirichlet forms on infinite dimensional spaces. For instance, let $(E,H,\mu)$ be an abstract Wiener space, cf. \cite{BH91,Gross1,Gross2, MR92,Sudakov}, let 
\[\mathcal{F}C_b^\infty:=\left\lbrace f(l_1,...,l_n): n\in\mathbb{N}, f\in C_b^\infty(\mathbb{R}^n), l_1,...,l_n\in E'\right\rbrace,\]
\[\left\langle \nabla u(z),h\right\rangle_H:=\frac{\partial u}{\partial h}(z),\ \ h\in H,\]
for any $u\in \mathcal{F}C_b^\infty$ and 
\[\mathcal{E}(u):=\int_E \left\|\nabla u\right\|_H^2 d\mu.\]
Then $(\mathcal{E},\mathcal{F}C_b^\infty)$ is closable on $L_2(E,\mu)$ and its closure $(\mathcal{E},\mathcal{F})$ is a Dirichlet form. Its energy measure is given by $\left\|\nabla u\right\|_H^2 d\mu$ on $E$. However, as the Gaussian measure $\mu$ is quasi-invariant under translations by elements of the (infinite dimensional) generalized Cameron-Martin space $H$, the space $H$ has zero Gaussian measure, hence zero energy measure. In other words, the space $H$ is too small to carry a nontrivial energy measure, but on the larger space $E$ the energy measures generally are nontrivial.
\end{remark}

\section{Separation of points and separable resistance forms}\label{S:seppoints}

In addition to Assumption \ref{A:nonvanish} respectively \ref{A:nonvanishenergy} we now assume the following.

\begin{assumption}\label{A:separate}
The space $\mathcal{B}$ separates points, that is for each $x,y\in X$, there are $f\in\mathcal{B}$ such that $f(x) \neq f(y)$. 
\end{assumption}

An immediate consequence of this assumption is that $\iota:X\to \Delta$ is injective, so $X$ is embedded in $\Delta$ as $\iota(X)$. Thus we will use $X$ and $\iota(X)$ interchangeably. 
We further assume that $\iota(X)$ is a Borel set with respect to the Gelfand topology in $\Delta$, although this assumption is technical and often can be weakened or eliminated, depending on the situation.  

\begin{remark}\mbox{}
\begin{enumerate}
\item[(i)] Assumption \ref{A:separate} leads to a situation similar to the one in \cite[Section 2]{AR89}. See also the references cited there.
\item[(ii)] If $\mathcal{B}$ does not separate points, one can define an an equivalence relation $\sim$ on $X$ by $x\sim y$ if $f(x) = f(y)$ for all $f\in\mathcal{B}$. Then all functions in $\mathcal{B}$ naturally define functions on the quotient space $\tilde X = X/\sim$, and functions in $\mathcal{B}$ separates equivalent classes. In this case, $\tilde\iota: \tilde{X}\to\Delta$, defined by $\tilde\iota([x]) = \iota(x)$ is an embedding with $\tilde\iota(\tilde X) = \iota(X)$.
\end{enumerate} 
\end{remark}

In light of \ref{T:transfermeasure}, any $\mu\in\mathcal{D}(\mathcal{B})$ can be extended to a positive measure on $\Delta$. By Assumption \ref{A:separate} we may consider the measure of $X$ in $\Delta$. In particular, for any $\sigma$-finite $\mu\in\mathcal{D}(\mathcal{B})$, we can extend $\mu$ to $\Delta$ either by considering $\hat\mu$, or by
\[
\nu(A) = \mu(A\cap X).
\] 
However, by equation \eqref{E:integralscoincide} and the Riesz representation theorem, $\hat\mu$ and $\nu$ coincide. 

The fact that $X$ is a set of full measure $\hat\mu$ allows us a technique for extending results for Dirichlet forms on locally compact spaces to a more general class of spaces. The following result is a version of \cite[Theorem 2.7]{AR89}.

\begin{proposition}
Since $\hat{\mathcal{E}}$ is a regular Dirichlet form on $\Delta$, there is a $\hat\mu$-symmetric Hunt process on $\Delta$ with Dirichlet form $\hat{\mathcal{E}}$. Since $A(\mathcal{B}(\mathcal{E}))$ separates points, $X$ is naturally identified as a subset of $\Delta$ with full $\hat\mu$-measure. By \cite[Lemma 4.1.1]{FOT94}, this implies that the process on $\Delta$ is contained in $X$ with probability 1, thus can be thought of as a process on $X$. 
\end{proposition}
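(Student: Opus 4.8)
The plan is to read the statement as three independent assertions and to verify them in turn. First, by Theorem \ref{T:closable} the transferred form $(\hat{\mathcal{E}},\hat{\mathcal{F}})$ is a symmetric regular Dirichlet form on $L_2(\Delta,\hat{\mu},\mathbb{R})$, and by the remark at the end of that proof Assumption \ref{A:nonvanishenergy} forces $\Delta$ to be a locally compact, second countable Hausdorff space, hence metrizable. These are precisely the hypotheses of the construction theorem for symmetric Hunt processes, \cite[Theorem 7.2.1]{FOT94}. I would therefore begin by quoting it to produce a $\hat{\mu}$-symmetric Hunt process $\hat{M}=(\hat{X}_t,\hat{\mathbb{P}}_x)$ on $\Delta$ whose associated Dirichlet form is $(\hat{\mathcal{E}},\hat{\mathcal{F}})$.

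Second, I would record that $X$ sits inside $\Delta$ as a Borel set of full measure. Under Assumption \ref{A:separate} the map $\iota$ is injective, so $\iota(X)$ is a faithful copy of $X$, and it is Borel by hypothesis. The identification of measures established just before the statement, $\hat{\mu}(A)=\nu(A)=\mu(A\cap X)$ for Borel $A\subset\Delta$ (a consequence of \eqref{E:integralscoincide} and the uniqueness in the Riesz representation theorem), gives $\hat{\mu}(\Delta\setminus\iota(X))=0$, since $(\Delta\setminus\iota(X))\cap X=\varnothing$. Thus $N:=\Delta\setminus\iota(X)$ is a Borel, $\hat{\mu}$-null set.

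Third, and this is the substance, I would confine the process to $\iota(X)$. The mechanism is \cite[Lemma 4.1.1]{FOT94}: once $N$ is known to be \emph{properly exceptional} (Borel, $\hat{\mu}$-null, and such that $\Delta\setminus N$ is $\hat{M}$-invariant, i.e.\ the process started off $N$ almost surely never enters $N$, before or at any time), the restriction of $\hat{M}$ to $\Delta\setminus N=\iota(X)$ is again a Hunt process, now living on $\iota(X)\cong X$ with probability one. Transporting it back through $\iota^{-1}$ then yields the desired Markov process on $X$.

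The main obstacle is exactly the upgrade from "$\hat{\mu}$-null" to "properly exceptional": in general a null set need not be avoided by the process. What is genuinely required is that $N$ be \emph{exceptional}, equivalently (for a regular form) of zero capacity, $\cpct(N)=0$, after which the standard enlargement to a properly exceptional set and \cite[Lemma 4.1.1]{FOT94} apply verbatim. I would therefore either (i) establish $\cpct(\Delta\setminus\iota(X))=0$ directly from the structure of $\hat{\mathcal{B}}(\mathcal{E})$ and the density of $\iota(X)$ in $\Delta$, or, absent such a capacitary estimate, (ii) weaken the conclusion to confinement for \emph{quasi-every} starting point, which already suffices to regard $\hat{M}$ as a process on $X$ up to an exceptional set. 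This is precisely where the "technical" nature of the Borel and full-measure hypotheses flagged after Assumption \ref{A:separate} enters; Example \ref{Ex:counter} shows that the energy-theoretic size of $\iota(X)$ can be delicate, so the estimate in (i) cannot be taken for granted and is where the essential work lies.
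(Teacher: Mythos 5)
Your first two steps coincide with the paper's own argument: the Hunt process on $\Delta$ comes from the regularity of $(\hat{\mathcal{E}},\hat{\mathcal{F}})$ (Theorem \ref{T:closable}) together with the construction theorem in \cite{FOT94}, and $\hat\mu(\Delta\setminus\iota(X))=0$ comes from the identification $\hat\mu=\nu$, $\nu(A)=\mu(A\cap X)$, recorded just before the proposition. The problem is your third step, and it is a genuine one: your two proposed repairs are in fact the \emph{same} condition, and that condition is false in the generality of the proposition. For a nearly Borel set $N:=\Delta\setminus\iota(X)$, ``the process started at quasi-every point never enters $N$'' is precisely the statement that $N$ is exceptional, which for a regular Dirichlet form is equivalent to $\cpct(N)=0$ (cf.\ \cite[Section 4.2]{FOT94}); so your option (ii) is not a weakening of option (i) but a restatement of it. Both fail in the paper's own Example \ref{Ex:counter}: there $X=\mathbb{Q}\cap[0,1]$, $\Delta$ is identified with $[0,1]$, and the transferred process is the time change of (reflected) Brownian motion by the additive functional of the purely atomic measure $\hat\mu$. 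Since the atoms are dense, that additive functional is strictly increasing, so the time change is a continuous reparametrization of Brownian motion; its paths are continuous and non-constant, hence enter the irrationals $N$ immediately from \emph{every} starting point. Thus $\cpct(N)>0$ and pathwise confinement is simply false, even though this example (a resistance form, the paper's target application) satisfies all hypotheses of the proposition.

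What your proposal misses is that the confinement has to be weakened in \emph{time}, not in the starting point, and that this is what the citation of \cite[Lemma 4.1.1]{FOT94} is doing. That lemma concerns symmetry and null sets at the level of the transition function: since the Hunt process is $\hat\mu$-symmetric and $\hat\mu(N)=0$, one gets $\hat p_t(x,N)=0$ for each fixed $t>0$ and $\hat\mu$-a.e.\ (indeed quasi-every) $x$, hence all finite-dimensional distributions of the process, for such starting points, are carried by $\iota(X)$. In Example \ref{Ex:counter} this is visible concretely: the time-changed process has a transition density with respect to the atomic measure $\hat\mu$, so at every fixed time it sits at a rational almost surely, even though its path sweeps through the irrationals. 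This fixed-time statement is the sense in which ``contained in $X$ with probability $1$'' is true, and it is all that is needed to ``think of the process as a process on $X$'': one restricts the transition function to $X$ and invokes Kolmogorov's extension theorem, obtaining a Markov process with state space $X$ but with no claim about path regularity. The paper says exactly this in the sentence following the proposition --- it does not claim a Hunt process on $X$, only that ``the random process is well defined.'' So your criticism that full measure does not imply avoidance is correct and important, but the conclusion to draw is that the statement must be read and proved at the level of finite-dimensional distributions; trying instead to prove the capacity estimate, as you propose, is attempting to prove something false.
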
 

Note  that we do not claim that this process is a Hunt process on $X$ because we do not consider 
$X$ as a topological space. However the random process is well defined, which is useful in some applications such as the following.

In  what follows we will consider a special class of Dirichlet forms, the resistance forms of Kigami~\cite{Ki,Ki03,Ki12},  for which points have positive capacity.  For simplicity we define these forms in the separable case, which can be essentially reduced to a form on a countable set. 

\def\E{\mathcal E}

\begin{definition}\label{def-resistform}
A pair $(\E,\mathcal F)$ is called a resistance form on a countable set $V_*$ if it satisfies:
\begin{itemize}
\item[(RF1)] $\mathcal F$ is a linear subspace of the functions $V_{\ast}\to\mathbb{R}$ that contains the constants,
 $\E$ is a nonnegative symmetric quadratic form on $\mathcal F$, and $\E(u,u)=0$ if and only if $u$ is constant.
\item[(RF2)] The quotient of $\mathcal F$ by constant functions is Hilbert space with the norm $\E(u,u)^{1/2}$.
\item[(RF3)] If $v$ is a function on a finite set $V\subset V_*$ then there is $u\in\mathcal F$ with $u\big|_V=v$.
\item[(RF4)] For any $x,y\in V_*$ the effective resistance between $x$ and $y$ is
\begin{equation*}
R(x,y)=\sup\left\{ \frac{\big(u(x)-u(y)\big)^2}{\E(u,u)}:u\in\mathcal F,\E(u,u)>0\right\}<\infty.
\end{equation*}
\item[(RF5)] (Markov Property.) If $u\in\mathcal F$ then $\bar u(x)=\max(0,\min(1,u(x)))\in\mathcal F$ and $\E(\bar u,\bar u)\leqslant\E(u,u)$.
\end{itemize}
\end{definition}

The  resistance forms on countable sets are    determined by a sequence of traces on finite subsets, as in the following two propositions. 

\begin{proposition}[\protect{\cite{Ki,Ki03,Ki12}}]\label{basicfactsaboutresistmetric} Resistance forms have the following properties.
\begin{enumerate}
\item[(i)] $R(x,y)$ is a metric on $V_*$.  Functions in
$\mathcal F$ are $R$-continuous, thus have unique $R$-continuous extension to the $R$-completion $X_R$ of $V_*$.
\item[(ii)] If $U\subset V_*$ is finite then a Dirichlet form $\E_U$ on $U$ may be defined
by
\begin{equation*}
    \E_U(f,f)=\inf\{\E(g,g):g\in\mathcal F, g\big|_U=f\}
    \end{equation*}
in which the infimum is achieved at a unique $g$. The form $\E_U$ is called the trace of $\E$ on $U$, denoted
$\E_U=\text{Trace} U(\E)$.  If $U_1\subset U_2$ then $\E_{U_1}=\text{Trace} {U_1}(\E_{U_2})$.
\end{enumerate}
\end{proposition}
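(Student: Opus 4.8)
The plan is to treat the two parts separately, basing everything on the Hilbert space structure furnished by (RF2). Throughout write $\mathcal{V}:=\mathcal{F}/\mathbb{R}$ for the quotient of $\mathcal{F}$ by the constants, which by (RF1)--(RF2) is a Hilbert space under the inner product induced by $\E$. For fixed $x,y\in V_*$ the linear functional $u\mapsto u(x)-u(y)$ is well defined on $\mathcal{V}$ (it is insensitive to additive constants), and (RF4) says precisely that it is bounded, with operator norm $R(x,y)^{1/2}$. First I would invoke the Riesz representation theorem to produce a unique $g_{xy}\in\mathcal{V}$ with
\[
u(x)-u(y)=\E(u,g_{xy}),\qquad u\in\mathcal{F},
\]
and note that Cauchy--Schwarz gives $\E(g_{xy},g_{xy})=R(x,y)$, the supremum in (RF4) being attained exactly on the line through $g_{xy}$.

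For part (i), symmetry of $R$ is immediate from the definition, and $R(x,y)>0$ for $x\neq y$ follows because (RF3) supplies a function separating $x$ and $y$, which is nonconstant and hence of positive energy by (RF1). The substantive point is the triangle inequality $R(x,z)\le R(x,y)+R(y,z)$, and this is where I expect the main difficulty to lie. Choosing the representative with $g_{xy}(y)=0$ gives $g_{xy}(x)=R(x,y)$, and I would use the Markov property (RF5), applied to the truncation of $g_{xy}$ to $[0,R(x,y)]$ together with the uniqueness of the energy minimizer, to establish the maximum principle $0\le g_{xy}\le R(x,y)$; in particular $g_{xy}\ge 0$ everywhere. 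Since $u\mapsto u(x)-u(z)$ telescopes, the representers satisfy $g_{xz}=g_{xy}+g_{yz}$ in $\mathcal{V}$, and expanding the norm gives
\[
R(x,z)=R(x,y)+R(y,z)+2\,\E(g_{xy},g_{yz}).
\]
The cross term is computed from the reproducing identity: $\E(g_{xy},g_{yz})=g_{xy}(y)-g_{xy}(z)=-g_{xy}(z)\le 0$ by the normalization and the nonnegativity just established, which yields the desired inequality. I regard securing $g_{xy}\ge 0$ from the Markov property as the crux of the argument; without (RF5) one obtains only the triangle inequality for $R^{1/2}$.

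The continuity claim in (i) then follows from the estimate $|u(x)-u(y)|\le \E(u,u)^{1/2}R(x,y)^{1/2}$, valid for every $u\in\mathcal{F}$ by (RF4): each $u$ is uniformly (indeed $\tfrac{1}{2}$-H\"older) continuous with respect to the metric $R$, so it extends uniquely and continuously to the completion $X_R$ by the standard extension of uniformly continuous maps to a completion.

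For part (ii), fix a finite $U\subset V_*$ and a basepoint $p_0\in U$. The admissible set $\{g\in\mathcal{F}: g|_U=f\}$ is nonempty by (RF3) and descends to a closed affine subspace $S$ of $\mathcal{V}$, closed because each constraint $g(p)-g(p_0)=f(p)-f(p_0)$ is a level set of a bounded functional. Minimizing the squared norm $\E(g,g)$ over the closed convex set $S$, I would obtain existence and uniqueness of the minimizer as the projection of the origin onto $S$; the parallelogram identity shows that any two minimizers differ by an element of zero energy vanishing on $U$, hence coincide by (RF1). This defines $\E_U$ and identifies it as a resistance (Dirichlet) form on the finite set $U$, the Markov property being inherited through the truncation. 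Finally, for $U_1\subset U_2$ the consistency $\E_{U_1}=\mathrm{Trace}_{U_1}(\E_{U_2})$ is the tower property of nested minimization: minimizing $\E(g)$ subject to $g|_{U_1}=f$ may be carried out by first fixing the values on $U_2$ and minimizing, then optimizing over those values. As a byproduct the same minimization shows that $R$ agrees with the effective resistance of any finite trace $\E_U$ containing the two points, which gives an alternative route to the triangle inequality by reduction to a three-point network.
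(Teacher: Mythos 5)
Your proof is correct, but there is no internal argument to compare it against: the paper states this proposition as a quotation from Kigami \cite{Ki,Ki03,Ki12} and gives no proof of its own. In those references the development is organized the other way around: part (ii) comes first (the trace $\mathcal{E}_U$ is constructed by essentially the projection argument you use), and the metric property of $R$ is then reduced, via traces, to effective resistance on a finite network containing the relevant points, where the triangle inequality follows from the maximum principle for harmonic extensions on finite sets. Your route proves (i) directly in the Hilbert space $\mathcal{F}/\mathbb{R}$: the Riesz representer $g_{xy}$ of $u\mapsto u(x)-u(y)$, the telescoping identity $g_{xz}=g_{xy}+g_{yz}$, and the sign of the cross term $\mathcal{E}(g_{xy},g_{yz})=g_{xy}(y)-g_{xy}(z)=-g_{xy}(z)\le 0$, with nonnegativity of $g_{xy}$ secured by truncation (RF5) plus uniqueness of the constrained energy minimizer. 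You correctly identify that maximum-principle step as the crux, and your treatment of it (truncate the normalized minimizer, compare energies, invoke uniqueness) is sound; so are the parallelogram-law uniqueness and the nested-minimization tower property in (ii). The one point you pass over silently is that $\mathcal{E}_U$ is genuinely a \emph{bilinear} form, which requires the linearity of the harmonic extension map $f\mapsto g_f$; this follows routinely from the orthogonality characterization of the projection, but it should be said, since ``Dirichlet form'' presupposes it. As for what each approach buys: yours is shorter, intrinsic, and avoids any finite-network computation; Kigami's ordering makes the finite-dimensional trace machinery the primary object, which is precisely what the paper needs afterwards in Proposition \ref{Dirformdeterminedbytraces}, where compatible sequences of traces are used to reconstruct the form. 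Your closing observation, that the minimization also identifies $R$ with the effective resistance of any finite trace containing the two points, is exactly the bridge between the two routes.
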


\begin{proposition}[\protect{\cite{Ki,Ki03,Ki12}}]\label{Dirformdeterminedbytraces}
Suppose $V_n\subset V_*$ are finite sets such that $V_n\subset
V_{n+1}$ and $\bigcup_{n=0}^\infty V_n$ is $R$-dense in
$V_*$. Then $\E_{{V_n}}(f,f)$ is non-decreasing and $\E(f,f)=\lim_{n\to\infty}\E_{{V_n}}(f,f)$ for any $f\in\mathcal F$. Hence $\E$ is uniquely defined by the sequence of finite dimensional traces $\E_{V_n}$ on ${V_n}$.

Conversely, suppose $V_n$ is an increasing sequence of finite sets each supporting a resistance form $\E_{V_n}$,
and the sequence is compatible in that each $\E_{V_n}$ is
the trace of $\E_{V_{n+1}}$ on ${V_n}$. Then there is a resistance form
$\E$ on $V_*=\bigcup_{n=0}^\infty V_n$ such that
$\E(f,f)=\lim_{n\to\infty}\E_{{V_n}}(f,f)$ for any $f\in\mathcal F$.
\end{proposition}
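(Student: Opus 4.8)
The plan is to derive both assertions from the variational description of the trace in Proposition~\ref{basicfactsaboutresistmetric}(ii) together with an orthogonal–projection argument in the Hilbert space of finite-energy functions. Fix a base point $o\in V_0$ and work throughout with representatives normalized by $f(o)=0$; on $\mathcal F_o:=\{u\in\mathcal F:u(o)=0\}$ the form $\E$ is a genuine inner product by (RF1), and the quotient $\mathcal F/\text{const}$ is isometrically identified with $\mathcal F_o$. For finite $U\subset V_*$ put $\mathcal N_U:=\{h\in\mathcal F_o:h|_U=0\}$, a closed subspace; the trace minimizer $g_U$ with $g_U|_U=f|_U$ and $\E(g_U,g_U)=\E_U(f|_U,f|_U)$ is precisely the $\E$-orthogonal projection of $f$ onto $\mathcal N_U^{\perp}$, since $f-g_U\in\mathcal N_U$ while $g_U\perp\mathcal N_U$.

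For the direct direction, monotonicity is immediate from the tower property $\E_{V_n}=\text{Trace}_{V_n}(\E_{V_{n+1}})$: the restriction $f|_{V_{n+1}}$ competes in the infimum defining $\E_{V_n}(f|_{V_n},f|_{V_n})$, whence $\E_{V_n}(f|_{V_n},f|_{V_n})\le \E_{V_{n+1}}(f|_{V_{n+1}},f|_{V_{n+1}})\le\E(f,f)$. Thus $\E_{V_n}(f|_{V_n},f|_{V_n})=\|P_{\mathcal N_{V_n}^{\perp}}f\|_{\E}^2$ is non-decreasing, and since $\mathcal N_{V_{n+1}}\subset\mathcal N_{V_n}$ the subspaces $\mathcal N_{V_n}^{\perp}$ increase. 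Their union is $\E$-dense, because its orthogonal complement is $\bigcap_n\mathcal N_{V_n}=\{u\in\mathcal F_o:u\equiv 0\text{ on }\bigcup_n V_n\}$, which is trivial: $\bigcup_n V_n$ is $R$-dense and functions in $\mathcal F$ are $R$-continuous by Proposition~\ref{basicfactsaboutresistmetric}(i). Hence $P_{\mathcal N_{V_n}^{\perp}}f\to f$ and $\E_{V_n}(f|_{V_n},f|_{V_n})\to\E(f,f)$; uniqueness of $\E$ then follows by polarization, the diagonal having been pinned down by the traces.

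For the converse I would set $\E(f,f):=\lim_n\E_{V_n}(f|_{V_n},f|_{V_n})$, the limit existing in $[0,\infty]$ by the same monotonicity run off compatibility, and let $\mathcal F:=\{f:V_*\to\mathbb{R}:\E(f,f)<\infty\}$. I would then check (RF1)--(RF5) in dependency order. Properties (RF1) and (RF5) pass to the limit termwise, since $\E_{V_n}(\cdot,\cdot)^{1/2}$ is a seminorm, $\E(u,u)=0$ forces each $\E_{V_n}(u|_{V_n},u|_{V_n})=0$ hence $u$ constant with matching values, and truncation commutes with restriction. For (RF3) and (RF4) I use harmonic extension: any finite $V$ lies in some $V_N$; extend a prescribed $v$ across $V_N$ and then from $V_m$ to $V_{m+1}$ by the trace minimizer, which by compatibility leaves $\E_{V_m}$ unchanged, so the extended function has finite energy and lies in $\mathcal F$. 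The same device shows that for $x,y\in V_N$ the supremum defining $R(x,y)$ is attained among energy-preserving extensions and equals the finite resistance of $\E_{V_N}$, giving (RF4).

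The step I expect to be the crux is completeness (RF2) of $\mathcal F_o$ in the converse, and this is why (RF4) must be secured first: the resistance bound $|u(x)|^2=|u(x)-u(o)|^2\le R(x,o)\,\E(u,u)$ converts an $\E$-Cauchy sequence $(u_k)$ into a pointwise Cauchy one, with pointwise limit $u$. Finite-dimensional continuity then gives $\E_{V_n}(u|_{V_n},u|_{V_n})=\lim_k\E_{V_n}(u_k|_{V_n},u_k|_{V_n})\le \sup_k\E(u_k,u_k)<\infty$, and taking the supremum over $n$ yields $u\in\mathcal F$; the analogous estimate $\E(u-u_k,u-u_k)\le\limsup_j\E(u_j-u_k,u_j-u_k)$ furnishes $\E$-convergence. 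The only delicate point is interchanging the limit in the approximating index with the supremum over levels $n$, which is controlled by the uniform Cauchy bound. The mirror difficulty in the direct direction---identifying the monotone limit with $\E(f,f)$ rather than merely bounding it---rests on exactly the same density-plus-continuity input isolated above.
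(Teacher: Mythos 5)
The paper itself gives no proof of this proposition: it is stated as a quotation from Kigami \cite{Ki,Ki03,Ki12}, with no argument supplied, so your proposal can only be measured against the cited literature, and against that it holds up. What you have written is essentially the standard Kigami construction made self-contained in Hilbert-space language. In the direct direction you identify the trace energy $\mathcal{E}_{V_n}(f|_{V_n},f|_{V_n})$ with $\|P_{\mathcal{N}_{V_n}^{\perp}}f\|_{\mathcal{E}}^2$ and reduce the convergence statement to strong convergence of orthogonal projections along an increasing family of closed subspaces, whose union is dense precisely because $\bigcup_n V_n$ is $R$-dense and finite-energy functions are $R$-continuous (Proposition \ref{basicfactsaboutresistmetric}(i)). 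In the converse direction you take the monotone limit of compatible traces and verify (RF1)--(RF5), with energy-preserving harmonic extension handling (RF3) and (RF4), and the resistance estimate $|u(x)-u(o)|^2\le R(x,o)\,\mathcal{E}(u,u)$ converting $\mathcal{E}$-Cauchy sequences into pointwise Cauchy ones for the completeness property (RF2). Both halves are correct, and your ordering of the verification (securing (RF4) before attacking (RF2)) is exactly the right dependency.

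Two points should be made explicit rather than implicit. First, the projection argument needs $\mathcal{N}_{V_n}=\{h\in\mathcal{F}_o:h|_{V_n}=0\}$ to be \emph{closed}; this is again (RF4) at work, via continuity of point evaluations, and it is also why anchoring $o\in V_0\subset V_n$ matters: competitors in the trace infimum then automatically vanish at $o$, so the minimization genuinely takes place in the affine subspace $f+\mathcal{N}_{V_n}$ of $\mathcal{F}_o$. Second, your closing claim in the direct direction proves slightly less than the word ``uniquely'' in the statement suggests: polarization pins down the \emph{values} of $\mathcal{E}$ on $\mathcal{F}$, but uniqueness of the resistance form also requires recovering the \emph{domain} $\mathcal{F}$ from the traces. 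Your own machinery supplies this --- for any $R$-continuous $f$ with $\sup_n\mathcal{E}_{V_n}(f|_{V_n},f|_{V_n})<\infty$, the harmonic extensions of the restrictions $f|_{V_n}$ form an $\mathcal{E}$-Cauchy sequence whose limit lies in $\mathcal{F}$ and agrees with $f$ on the $R$-dense set $\bigcup_n V_n$, hence equals $f$ --- but it is an additional step using completeness and $R$-continuity, not a corollary of polarization.
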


The following theorem follows easily from the analysis presented above. See [Chapter 5]\cite{Ki12} for discussion why the effective resistance metric is not suitable to define topology to produce a regular Dirichlet form.

\begin{theorem}
There exists a finite measure $\mu$ on $X=V_*$ such that:
\begin{enumerate}
	\item[(i)] any point of X has positive measure and any function of finite
energy is in $L^2(X,\mu)$;
\item[(ii)] $\mathcal E$ is a Dirichlet form on $L^2(X,\mu)$;
\item[(iii)] the embedding of $X$ into the Gelfand spectrum $\Delta$ yields a regular
Dirichlet form on $L^2(\Delta,\mu)$.
\end{enumerate}
\end{theorem}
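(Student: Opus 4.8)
The plan is to build $\mu$ as a weighted sum of point masses, read (i) off directly, verify by hand that $(\mathcal E,\mathcal F)$ is a Dirichlet form for (ii), and then invoke Theorem~\ref{T:closable} for (iii). Fix an enumeration $V_*=\{x_n\}$ and a base point $x_0$. The whole engine is the resistance estimate (RF4), $|u(x)-u(x_0)|^2\le R(x,x_0)\,\mathcal E(u,u)$. I would set
\[\mu:=\sum_n c_n\,\delta_{x_n},\qquad c_n:=\frac{2^{-n}}{1+R(x_n,x_0)},\]
so that $\mu(V_*)\le 1$ is finite and every point carries mass $c_n>0$. For $u\in\mathcal F$ the elementary bound $u(x_n)^2\le 2u(x_0)^2+2R(x_n,x_0)\mathcal E(u,u)$ gives $\int u^2\,d\mu\le 2u(x_0)^2\sum_n c_n+2\mathcal E(u,u)\sum_n c_n R(x_n,x_0)<\infty$, since both series are dominated by $\sum 2^{-n}$. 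This is exactly (i): $\mu$ is finite, charges every point, and $\mathcal F\subset L^2(X,\mu)$.

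For (ii) the Markov property is (RF5), and the constants lie in $\mathcal F\cap L^2(X,\mu)$ because $\mu$ is finite. For \emph{closedness}, if $(u_k)$ is $\mathcal E_1$-Cauchy then it is $\mathcal E$-Cauchy, so by (RF2) there is $v\in\mathcal F$ with $\mathcal E(u_k-v,u_k-v)\to0$, while $L^2$-completeness gives $u_k\to w$ in $L^2(X,\mu)$. The resistance estimate forces $u_k-v$ to converge pointwise to a constant, and comparing this with the $L^2$-limit (legitimate precisely because every point carries mass, so $L^2$-convergence yields pointwise convergence along a subsequence) shows that $w-v$ is that constant; hence $w\in\mathcal F$ and $u_k\to w$ in $\mathcal E_1$. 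For \emph{density}, bounded functions are dense in $L^2(X,\mu)$, and a bounded $f$ is approximated by the minimal-energy extension $h_N$ of $f$ restricted to a nested exhausting family of finite sets $V_N$, whose existence and uniqueness is Proposition~\ref{basicfactsaboutresistmetric}(ii). Since truncation does not increase energy (RF5), the minimizer satisfies $\|h_N\|_\infty\le\|f\|_\infty$, whence $\|f-h_N\|_{L^2}^2=\sum_{n>N}c_n|f(x_n)-h_N(x_n)|^2\le 4\|f\|_\infty^2\sum_{n>N}c_n\to0$. Thus $\mathcal F$ is dense and $(\mathcal E,\mathcal F)$ is a Dirichlet form.

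For (iii) I would check the remaining hypotheses feeding Theorem~\ref{T:closable}. The algebra $\mathcal B(\mathcal E)$ of bounded finite-energy functions contains the constants, hence vanishes nowhere, so Assumption~\ref{A:nonvanish} holds; moreover (RF3) produces, for any two points, a function taking distinct values there, so Assumption~\ref{A:separate} holds and the discussion of Section~\ref{S:seppoints} applies. Then $\iota$ is injective, $X$ is identified with a subset of $\Delta$, and by \eqref{E:integralscoincide} the transferred measure $\hat\mu$ coincides with $\nu(\cdot)=\mu(\cdot\cap X)$, so that $L^2(\Delta,\hat\mu)=L^2(\Delta,\mu)$. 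Theorem~\ref{T:closable} then yields that $(\hat{\mathcal E},\hat{\mathcal B}(\mathcal E))$ is closable and its closure is a regular Dirichlet form on $L^2(\Delta,\mu)$, which is (iii).

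The main obstacle is the supremum-norm \emph{separability} of $\mathcal B(\mathcal E)$ required by Assumption~\ref{A:nonvanishenergy}, which is what secures the second countability of $\Delta$. This is genuinely delicate and does \emph{not} follow from countability of $V_*$ alone: already for $V_*=\mathbb Z$ with unit conductances one can build, from disjointly supported triangular bumps of height $1$ over blocks of rapidly growing length, an uncountable family of bounded finite-energy functions that are pairwise at supremum distance $1$, so $\mathcal B(\mathcal E)$ is not sup-separable there. I would establish separability under the additional information that the resistance completion $X_R$ is compact, where the estimate (RF4) makes $\{u:\|u\|_\infty\le M,\ \mathcal E(u,u)\le M\}$ uniformly equicontinuous and Arzel\`a--Ascoli furnishes a countable sup-dense family; this is the substantive content of the word \emph{separable}. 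For the bare assertion of (iii), however, note that the closability and core arguments in the proof of Theorem~\ref{T:closable} use only the local compactness of $\Delta$ together with Lemma~\ref{L:dense}, so the regularity conclusion itself is available even when the stronger second-countability property is not.
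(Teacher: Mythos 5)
Your proof is correct, and it supplies an argument the paper itself omits: the paper merely asserts that this theorem ``follows easily from the analysis presented above'' and gives no actual proof. Your construction $\mu=\sum_n 2^{-n}\bigl(1+R(x_n,x_0)\bigr)^{-1}\delta_{x_n}$ is exactly the kind of measure the paper has in mind (compare Example \ref{Ex:counter}, where $\mu=\sum_n 2^{-n}\delta_{q_n}$ suffices because the resistance metric of the unit interval is bounded; your normalization by $1+R(x_n,x_0)$ is what handles unbounded resistance), and your verifications of (i) and (ii) are sound: the estimate $\bigl(u(x)-u(x_0)\bigr)^2\le R(x,x_0)\,\mathcal E(u,u)$ from (RF4) gives $\mathcal F\subset L^2(X,\mu)$, completeness of the quotient (RF2) combined with pointwise convergence (available precisely because every point carries positive mass) gives closedness, and the uniformly bounded minimal-energy extensions from Proposition \ref{basicfactsaboutresistmetric}(ii) give density. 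The most valuable part of your write-up is the point the paper glosses over: Theorem \ref{T:closable} is proved under Assumption \ref{A:nonvanishenergy}, which demands sup-norm separability of $\mathcal B(\mathcal E)$, and your example on $\mathbb Z$ with unit conductances (disjointly supported triangular bumps of height $1$ over blocks of rapidly growing length, yielding an uncountable family of bounded finite-energy functions that is $1$-separated in supremum norm) correctly shows that this separability can fail for a resistance form on a countable set, so the theorem cannot be obtained by a blind citation of Theorem \ref{T:closable}. Both of your remedies are legitimate: sup-separability does hold when the resistance completion $X_R$ is compact, by Arzel\`a--Ascoli applied to the sets $\{u:\|u\|_\infty\le M,\ \mathcal E(u,u)\le M\}$, which are equicontinuous by (RF4); and, independently, the closability and core arguments in the proof of Theorem \ref{T:closable} use only the local compactness of $\Delta$ together with Lemma \ref{L:dense}, so the conclusion (iii) in the sense of the existence of a core --- which is how the paper itself uses the word ``regular,'' cf.\ the remark following the Beurling--Deny theorem --- survives even without second countability of $\Delta$.
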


Moreover, one can see that for any other finite measure $\mu$ on $V_*$ one can obtain a regular Dirichlet form on 
$L^2(\Delta,\mu)$ by  modifying the domain. However the case of infinite measures is more delicate. 
For instance, in [Chapter 5]\cite{Ki12} one can see that choosing the counting measure on $V_*$ 
may not produce a regular Dirichlet form, even though the space $X$ is compact in the 
topology induced by the set of functions of finite energy (but is not locally compact in the topology induced by the 
effective resistance metric).


\begin{thebibliography}
\normalsize
\bibitem{AMR}
S. Albeverio, Z.-M. Ma, M. R\"ockner, \emph{A Beurling-Deny type structure theorem for Dirichlet forms on general state spaces}. --- 
in: Ideas and Methods in Mathematical Analysis, Stochastics and Applications, 115-123 Ed. S. Albeverio, J.E. Fenstad, H. Holden, T. Lindstr\o m, Cambridge Univ. Press, Cambridge 1992.

\bibitem{AR89}
S. Albeverio, M. R\"ockner, \emph{Classical Dirichlet forms on topological vector spaces - construction of an associated diffusion process}. --- 
Probab. Th. Rel. Fields {\bf 83} (1989), 405-434.

\bibitem{AR90}
S. Albeverio, M. R\"ockner, \emph{Classical Dirichlet forms on topological vector spaces - closability and a Cameron-Martin formula}. --- 
J. Funct. Anal. {\bf 88} (1990), 395-436.

\bibitem{Allain}
G. Allain, \emph{Sur la repr\'esentation des formes de Dirichlet}. ---
Ann. Inst. Fourier {\bf 25} (1975), 1-10.

\bibitem{Arveson}
W. Arveson, \emph{An Invitation to $C^\ast$-Algebras}.
Springer Graduate Texts in Math. {\bf 39}, Springer, New York, 1976.

\bibitem{BD}
A. Beurling, J. Deny, \emph{Espaces de Dirichlet I, le cas \'el\'ementaire}. ---
Acta Math. {\bf 99} (1958), 203-224.

\bibitem{BDb}
A. Beurling, J. Deny, \emph{Dirichlet spaces} ---
Proc. Nat. Acad. Sci. U.S.A. 45 (1959), 208-215.

\bibitem{Blackadar}
B. Blackadar, \emph{Operator Algebras: Theory of $C^\ast$-Algebras and von Neumann Algebras}.
Encyclopedia of Math. Sciences {\bf 122}, Springer, New York, 2006.


\bibitem{BH91}
N. Bouleau, F. Hirsch, \emph{Dirichlet Forms and Analysis on Wiener Space}.
deGruyter Studies in Math. {\bf 14}, deGruyter, Berlin, 1991.

\bibitem{ChF12}
Z.-Q. Chen, M. Fukushima, \emph{Symmetric Markov Processes, Time Change, and Boundary Theory},
Princeton Univ. Press, Princeton, 2012.

\bibitem{C06}
F. Cipriani, \emph{Dirichlet forms as Banach algebras and applications}, Pacific J. Math. {\bf 223} (2) (2006) --- 229-249.

\bibitem{D}
J. Deny, \emph{M\'ethodes Hilbertiennes et th\'eorie du potentiel}. 
CIME, Rome, 1970.

\bibitem{Dy1}
E.B. Dynkin, \emph{Foundations of the Theory of Markov Processes}.
Gosudarstv. Izdat. Fiz.-Mat. Lit., Moscow 1959, English translation \emph{Theory of Markov processes.}\  Prentice-Halland Pergamon Press, 1965, Dover Pub., 2006.

\bibitem{Dy}
E.B. Dynkin,
\emph{Markov Processes}.
Gosudarstv. Izdat. Fiz.-Mat. Lit., Moscow 1963, English translation Academic Press and  Springer-Verlag, 1965.

\bibitem{Dudley}
R.M. Dudley, \emph{Real Analysis and Probability}.
Cambridge studies in adv. math. {\bf 74}, Cambridgr Univ. Press, Cambridge, 2002.  

\bibitem{F89}
P.J. Fitzsimmons, \emph{Markov processes and nonsymmetri Dirichlet forms without regularity}. --- 
J. Funct. Anal. {\bf 85} (1989), 287-306. 

\bibitem{Fuchsst77}
B. Fuchssteiner, \emph{When does the Riesz representation theorem hold ?} --- 
Arch. Math. {\bf 28} (1977), 173-181.

\bibitem{FOT94}
M. Fukushima, Y. Oshima and M. Takeda, \emph{Dirichlet Forms and Symmetric Markov Processes}.
deGruyter, Berlin, New York, 1994.

\bibitem{Fu71}
M. Fukushima, \emph{Regular representations of Dirichlet spaces} ---
Trans. Amer. Math. Soc. 155 (2) (1971), 455-472.

 \bibitem{Gross1} L. Gross
\emph{Potential theory on Hilbert space}. --- 
J. Functional Analysis {\bf1} (1967) 123--181.

 \bibitem{Gross2} L. Gross 
\emph{Abstract Wiener spaces}. --- 
1967  Proc. Fifth Berkeley Sympos. Math. Statist. and Probability (Berkeley,
 Calif., 1965/66), Vol. II: Contributions to Probability Theory, Part 1 
 pp. 31--42 Univ. California Press, Berkeley, Calif. 

\bibitem{IR}
I.A. Ibragimov, Y.A. Rozanov, \emph{Gaussian Random Processes}. Nauka, Moscow, 1970 and 
Springer, New York - Berlin, 1978. 

\bibitem{Kaniuth}
E. Kaniuth, \emph{A Course in Commutative Banach Algebras}.
Springer, New York, 2009.


\bibitem{Ki} J. Kigami,
\emph{Analysis on Fractals.}\
Cambridge Tracts in Mathematics {\bf 143},
Cambridge University Press, 2001.


\bibitem{Ki03}
J. Kigami, \emph{Harmonic analysis for resistance forms}. --- J. Funct. Anal. 204 (2003), 525--544.

\bibitem{Ki12}
J. Kigami, \emph{Resistance forms, quasisymmetric maps and heat kernel estimates}.
 Mem. Amer. Math. Soc. {\bf216} (2012).  

\bibitem{K} A. A. Kirillov, \emph{A tale of two fractals.} (in Russian)
MCMNO 2010. 



\bibitem{LJ78}
Y. LeJan, \emph{Mesures associ\'ees \`a une forme de Dirichlet. Applications}. ---
Bull. Soc. Math. France {\bf 106} (1978), 61-112.

\bibitem{MR92}
Z.-M. Ma, M. R\"ockner, \emph{Introduction to the Theory of Non-Symmetric Dirichlet Forms}, 
Universitext, Springer, Berlin, 1992.

\bibitem{RS}M. Reed, B. Simon, {\sl Methods of Modern Mathematical
Physics. I. Functional Analysis}.  Academic Press, 1980.

\bibitem{RW} L.C.G. Rogers, D. Williams, 
{\sl Diffusions, Markov Processes, and Martingales. Volume one: Foundations},
2nd ed. Wiley, 1994.

\bibitem{Str} R. S. Strichartz, \emph{Differential Equations on
Fractals: A Tutorial.} Princeton University Press, 2006.

 \bibitem{Sudakov} 
V. N. Sudakov, \emph{Geometric problems of the theory of infinite-dimensional probability
 distributions}.  
 Trudy Mat. Inst. Steklov.  {\bf141}  (1976), 191 pp. 
Proc. Steklov Inst. Math., Springer (1979), 178pp.

\end{thebibliography}
\end{document}